\numberwithin{equation}{section}
\newtheorem{Th}{Theorem}[section]
\newtheorem{Rem}[Th]{Remark}
\newtheorem{Ex}[Th]{Example}
\newtheorem{Lemma}[Th]{Lemma}
\newtheorem{Def}[Th]{Definition}
\newtheorem{Prop}[Th]{Proposition}
\newtheorem{Cor}[Th]{Corollary}
\renewcommand{\section}%
   {\setcounter{equation}{0}\@startsection {section}{1}{\z@}{-3.5ex plus -1ex
  minus -.2ex}{2.3ex plus .2ex}{\Large\bf}}
\def\ds{\displaystyle}
\def\R{\mathbb R}
\def\C{\mathbb C}
\def\N{\mathbb N}
\newcommand{\Lin}{\mathcal{L}}
\newcommand{\Sch}{\mathcal{S}}
\newcommand{\calM}{\mathcal{M}}
\newcommand{\calN}{\mathcal{N}}
\newcommand{\bfM}{\mathbf{M}}
\newcommand{\bfN}{\mathbf{N}}
\newcommand{\bfc}{\mathbf{c}}
\def\lc{\mathop{\rm lc}\nolimits}
\newcommand{\beqsn}{\arraycolsep1.5pt\begin{eqnarray*}}
\newcommand{\eeqsn}{\end{eqnarray*}\arraycolsep5pt}
\newcommand{\beqs}{\arraycolsep1.5pt\begin{eqnarray}}
\newcommand{\eeqs}{\end{eqnarray}\arraycolsep5pt}
\title{Construction of the log-convex minorant of a sequence 
$\{M_\alpha\}_{\alpha\in\mathbb N_0^d}$}
\author[Boiti]{Chiara Boiti}
\address{
Dipartimento di Matematica e Informatica \\Universit\`a di Ferrara\\
Via Ma\-chia\-vel\-li n.~30\\
I-44121 Ferrara\\
Italy}
\email{chiara.boiti@unife.it}
\author[Jornet]{David Jornet}
\address{
Instituto Universitario de Matem\'atica Pura y Aplicada IUMPA\\
Universitat Po\-li\-t\`ecni\-ca de Val\`encia\\
Camino de Vera, s/n\\
E-46022 Valencia\\
Spain}
\email{djornet@mat.upv.es}
\author[Oliaro]{Alessandro Oliaro}
\address{Dipartimento di Matematica\\ Universit\`a di Torino\\
 Via Carlo Alberto n.~10\\ I-10123 Torino\\ Italy}
 \email{alessandro.oliaro@unito.it}
\author[Schindl]{Gerhard Schindl}
\address{Fakult\"at f\"ur Mathematik\\ Universit\"at Wien\\
Oskar-Morgenstern-Platz n.~1\\ A-1090 Wien\\ Austria}
 \email{gerhard.schindl@univie.ac.at}
\begin{document}

\keywords{regularization of sequences, log-convex sequences, ultradifferentiable functions, matrix weights}
\subjclass[2020]{40B05, 46A13, 46A45}

\begin{abstract}
We give a simple construction of the log-convex minorant of a sequence
$\{M_\alpha\}_{\alpha\in\mathbb N_0^d}$ and consequently extend to the 
$d$-dimensional case the
well-known formula that relates a log-convex sequence $\{M_p\}_{p\in\mathbb N_0}$ to its associated function $\omega_M$, that is $M_p=\sup_{t>0}t^p\exp(-\omega_M(t))$. We show that
in the more dimensional anisotropic case the
classical log-convex condition $M_\alpha^2\leq M_{\alpha-e_j}M_{\alpha+e_j}$ is not sufficient: convexity as a function of more variables is needed (not only coordinate-wise).
We finally obtain some applications to the inclusion of spaces of rapidly decreasing ultradifferentiable functions in the matrix weighted setting.
\end{abstract}

\maketitle

% computing hour and minutes
% \newcount\minutes
% \newcount\hour
% \newcount\minutea
% \newcount\minuteb
% \minutes=\time
%  \divide\minutes by 60
%  \hour=\minutes
%\minutes=\time
% \multiply \hour by 60
%  \advance \minutes by -\hour
% \divide \hour by 60
% \minuteb=\minutes
% \divide\minuteb by 10
%\minutea=\minuteb
%\multiply \minuteb by 10
%\advance \minutes by  -\minuteb
%\minuteb=\minutes
% IL NUMERO DEI MINUTI E' ORA ESPRESSO DA DUE CIFRE:
% \number\minutea    PER LE DECINE DI MINUTI
% \numnber\minuteb    PER LE UNITA' DI MINUTI
% A QUESTO PUINTO DATA E ORA SI SCRIVONO COME SOTTO:
%\date{\today \space \number\hour:\number\minutea \number\minuteb }
% \markboth{\today \space \number\hour:\number\minutea \number\minuteb }{\today \space \number\hour:\number\minutea \number\minuteb }
%

\markboth{\sc  Construction of the log-convex minorant\ldots}
 {\sc C.~Boiti, D.~Jornet, A.~Oliaro, G.~Schindl}

\section{Introduction}
For a sequence $\{M_p\}_{p\in\N_0}$ of real positive numbers (with $M_0=1$ for simplicity; 
$\N_0:=\N\cup\{0\}$), its 
{\em associated function} is defined by
\beqsn
\omega_M(t):=\sup_{p\in\N_0}\log\frac{t^p}{M_p},\qquad t>0.
\eeqsn
Mandelbrojt proved in \cite[Chap.~I]{M}  (see also \cite{K}) that if 
$\lim_{p\to+\infty}M_p^{1/p}=+\infty$ then
\beqs
\label{In1}
M_p=\sup_{t>0}\frac{t^p}{\exp\omega_M(t)},\qquad p\in\N_0,
\eeqs
if and only if $\{M_p\}_{p\in\N_0}$ is logarithmically convex, i.e.
\beqsn
M_p^2\leq M_{p-1}M_{p+1},\qquad\forall p\in\N.
\eeqsn
We refer also to the recent work \cite{S} where this construction has been studied again in detail, some technical ambiguities have been solved and non-standard cases have been studied as well; see Section \ref{sec2}.

However, to the best of our knowledge this condition has never been generalized to the $d$-dimensional anisotropic case ($d>1$), and the reason is that the classical coordinate-wise logarithmic convexity condition \eqref{LOG-C} for
a sequence $\{M_\alpha\}_{\alpha\in\N^d_0}$ 
%, i.e.
%\beqsn
%M_\alpha^2\leq M_{\alpha-e_j}M_{\alpha+e_j},\qquad\forall\alpha\in\N_0^d,\ 1\leq j\leq d,
%\eeqsn
is not sufficient to obtain the analogous of \eqref{In1} for $M_\alpha$, as explained in Remark~\ref{rem4}. The reason is that this is a convexity
condition on each variable separately and not on the globality of its variables.
Assuming the stronger condition that $\{M_\alpha\}_{\alpha\in\N^d_0}$ is log-convex on the globality of its variables $\alpha\in\N_0^d$ (see Definition~\ref{defcvx}),
we extend \eqref{In1} to $\alpha\in\N_0^d$ instead of $p\in\N_0$ (see
Theorem~\ref{cor1}).

To obtain this result we construct in Sections~\ref{sec1}-\ref{sec3} the (optimal) convex minorant of a
sequence $\{a_\alpha\}_{\alpha\in\N^d_0}$ (then $a_\alpha=\log M_\alpha$ in Section~\ref{sec4}).
The idea, in the one variable case, takes inspiration from the convex regularization of sequences of Mandelbrojt in \cite{M}, which was quite complicated and difficult to export to the more
dimensional case. Our construction is made by taking the supremum of hyperplanes approaching from below the given sequence and leads to the notion of convexity for a sequence
$\{a_\alpha\}_{\alpha\in\N^d_0}$ in the sense that $a_\alpha=F(\alpha)$ for a convex function
$F:[0,+\infty)^d\to\R$. This condition gives the suitable notion of logarithmic convexity for
a sequence $\{M_\alpha\}_{\alpha\in\N^d_0}$ in order to write it in terms of its associated function
as in \eqref{Q3}.

This result is a very useful tool for working in the anisotropic setting.
In Section~\ref{sec5}, indeed, we obtain some applications about inclusion of spaces of rapidly decreasing ultradifferentiable functions in the matrix weighted anisotropic setting, where
Theorem~\ref{cor1} is crucial (see Remark~\ref{rem62} and compare with the isotropic case in \cite{BJOS-fuzzy}).
In particular, we characterize the conditions on the matrix weights $\calM$ and $\calN$ in order to have a continuous inclusion between the spaces
$\Sch_{\{\calM\}}/\Sch_{(\calM)}$, $\Sch_{\{\calN\}}/\Sch_{(\calN)}$, both in the Roumieu and Beurling cases.
The advantages of the matrix weighted setting was already enlightened  in \cite{RS},  in order to treat at the same time classes in the sense of Komatsu \cite{K} (estimates of the derivatives with a sequence) and in the sense of Braun, Meise and Taylor  \cite{BMT} (estimates of the derivatives via a weight function). 
Since then several papers using weight matrices have been published. We mention, for instance, \cite{BJOS-Banach,almostanalytic,furdos2022the,mixedramisurj}, and the references therein.

We refer 
to \cite{BMM} to compare classes of ultradifferentiable functions as defined by
Komatsu \cite{K} and as defined by Braun, Meise and Taylor \cite{BMT}, and
we point out the recent papers \cite{W, RW1, RW2, ACT, DC} for other different interesting
results in anisotropic Gelfand-Shilov spaces.

\section{Construction of the convex minorant candidate}
\label{sec1}

Let us recall that a real sequence $\{a_p\}_{p\in\N_0}$ is said to be {\em convex} if
\beqsn
a_p\leq\frac12 a_{p-1}+\frac12a_{p+1},\qquad\forall p\in\N.
\eeqsn
This is equivalent to say that the polygonal obtained by connecting the points
$(p,a_p)$ with $(p+1,a_{p+1})$ by a straight line, for all $p\in\N_0$, is the graph of a convex function, or equivalently that there exists a convex function $F:\,[0,+\infty)\to \R$ with
$F(p)=a_p$ for all $p\in\N_0$.

This suggests for
a real sequence $\{a_\alpha\}_{\alpha\in\N_0^d}$ the following:
\begin{Def}
\label{def00}
We say that a sequence $\{a_\alpha\}_{\alpha\in\N_0^d}$ is {\em convex} if there exists a
convex function $F:\,[0,+\infty)^d\to\R$ with $F(\alpha)=a_\alpha$ for all $\alpha\in\N_0^d$.
\end{Def}
\begin{Def}
\label{def01}
The {\em convex minorant} of a sequence
 $\{a_\alpha\}_{\alpha\in\N_0^d}$ is the largest convex sequence
 $\{a^c_\alpha\}_{\alpha\in\N_0^d}$ with $a^c_\alpha\leq a_\alpha$ for all $\alpha\in\N_0^d$.
\end{Def}

We want to construct the convex minorant of a sequence
$\{a_\alpha\}_{\alpha\in\N_0^d}\subset\overline{\R}:=\R\cup\{\pm\infty\}$ such that:
\begin{enumerate}[$(i)$]
\item
\vspace*{1mm}
$\ds a_\alpha>-\infty,\quad\forall\alpha\in\N_0^d,$
\item
\vspace*{1.5mm}
$\ds\lim_{|\alpha|\to+\infty}\frac{a_\alpha}{|\alpha|}=+\infty,
\quad\mbox{for}\ |\alpha|=\alpha_1+\ldots+\alpha_d,$
\item
\vspace*{1mm}
$\ds a_\alpha$ may be $+\infty$ at most for a finite number of multi-indices $\ds\alpha\in\N_0^d$,
\item
\vspace*{1mm}
$\ds a_0\in\R.$
\end{enumerate}

To this aim we set
\beqsn
S:=&&\{(\alpha,a_\alpha):\ \alpha\in\N_0^d\},\\
\Lin:=&&\{f:\R^d\to\R:\ f\ \mbox{is an affine function}\}\\
=&&\{f(x)=\langle k,x\rangle+c:\ (k,c)\in\R^d\times\R\}.
\eeqsn
Note that the graphs of the affine functions $f\in\Lin$ are hyperplanes in $\R^{d+1}$.

\begin{Lemma}
\label{lemma1}
Given $f\in\Lin$ we have 
\beqsn
f(\alpha)>a_\alpha,
\eeqsn
at most for a finite number of points $\alpha\in\N_0^d$.
\end{Lemma}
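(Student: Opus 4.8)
The plan is to show that for a fixed affine function $f(x)=\langle k,x\rangle+c$, the inequality $f(\alpha)>a_\alpha$ can hold only on a finite set, by exploiting the superlinear growth hypothesis $(ii)$. First I would observe that $f(\alpha)=\langle k,\alpha\rangle+c\le |k|\,|\alpha|+|c|$ (using, say, the Cauchy--Schwarz inequality or simply $|\langle k,\alpha\rangle|\le(\max_j|k_j|)|\alpha|$ since the $\alpha_j$ are nonnegative), so $f(\alpha)$ grows at most linearly in $|\alpha|$. On the other hand, by $(ii)$ there is a radius $R>0$ such that $a_\alpha/|\alpha|>|k|+1$ whenever $|\alpha|\ge R$, hence $a_\alpha>(|k|+1)|\alpha|\ge |k|\,|\alpha|+|c|\ge f(\alpha)$ as soon as $|\alpha|\ge\max(R,|c|)$. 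Thus $f(\alpha)>a_\alpha$ forces $|\alpha|<\max(R,|c|)$, and there are only finitely many multi-indices $\alpha\in\N_0^d$ with $|\alpha|$ bounded.

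One technical point I would be careful about is the possibility $a_\alpha=+\infty$: by hypothesis $(iii)$ this happens for only finitely many $\alpha$, and at such points $f(\alpha)>a_\alpha=+\infty$ is impossible, so these indices do not contribute; at all other points $a_\alpha\in\R$ by $(i)$, and the estimate above applies verbatim once $|\alpha|$ is large. So the argument is essentially a one-line comparison of growth rates, combined with the elementary fact that $\{\alpha\in\N_0^d:|\alpha|\le N\}$ is finite for every $N$.

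I do not expect any real obstacle here; the statement is a soft consequence of condition $(ii)$, and the only thing to get right is bookkeeping with the constants and handling the finitely many possible $+\infty$ values cleanly. The reason this lemma matters (and why it is worth stating) is that it guarantees the supremum over $\Lin$ of affine functions lying "essentially below" $S$ is a well-defined, nontrivial object: one can always modify the constant $c$ of a given $f$ downward by a finite amount to make $f(\alpha)\le a_\alpha$ hold everywhere, so the family of genuine affine minorants is nonempty, which is what the subsequent construction of the convex minorant relies on.
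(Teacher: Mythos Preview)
Your proof is correct and follows essentially the same route as the paper: bound $f(\alpha)$ linearly in $|\alpha|$ via Cauchy--Schwarz, then invoke assumption $(ii)$ to force $a_\alpha>f(\alpha)$ for all but finitely many $\alpha$. The paper's argument is slightly terser (it compares $|f(\alpha)|/|\alpha|$ with $a_\alpha/|\alpha|$ directly, without tracking the constants $R$ and $|c|$), but the underlying idea is identical.
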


\begin{proof}
For $|\alpha|\geq 1$ by the Cauchy-Schwarz inequality we have 
\beqsn
\frac{|f(\alpha)|}{|\alpha|}\leq
\frac{|\langle k,\alpha\rangle|+|c|}{|\alpha|}\leq
\frac{\|k\|\cdot\|\alpha\|}{|\alpha|}+|c|
\leq\|k\|+|c|,
\eeqsn
since $\|\alpha\|=\sqrt{\alpha_1^2+\ldots+\alpha_d^2}\leq\alpha_1+\ldots+\alpha_d=|\alpha|$.

On the other hand, assumption $(ii)$ implies 
\beqsn
\frac{a_\alpha}{|\alpha|}>\|k\|+|c|,
\eeqsn
for $|\alpha|$ large enough. This implies that only a finite number of $\alpha\in\N_0^d$ may satisfy that $|f(\alpha)|>a_\alpha$.
\end{proof}

Let us now consider
\beqsn
\Lin_S:=\{f\in\Lin:\ f(\alpha)\leq a_\alpha\ \forall\alpha\in\N_0^d\}.
\eeqsn
The graphs of the functions $f\in\Lin_S$ are the hyperplanes which lie under $S$.

Note that $\Lin_S\neq\emptyset$ by Lemma~\ref{lemma1}. As a matter of fact, given $f\in\Lin$, since $f(\alpha)\leq a_\alpha$ except a finite number of points $\alpha_1,\ldots,\alpha_\ell$, we have that
\beqsn
f-\max_{1\leq j\leq\ell}\{f(\alpha_j)-a_{\alpha_j}\}\in\Lin_S.
\eeqsn

The idea is now to consider the supremum of these hyperplanes which lie under $S$ and project on this set each $a_\alpha$ to construct the desired convex minorant sequence.
So let us first define
\beqs
\label{defF}
F(x):=\sup_{f\in\Lin_S}f(x).
\eeqs

Note that the above defined $F:\R^d\to\R$ is a convex function since it is the supremum of a set of convex (affine) functions: the epigraph is convex because it is the intersection of convex sets.
By construction
\beqsn
F(\alpha)\leq a_\alpha,\qquad\forall\alpha\in\N_0^d,
\eeqsn
and we claim that
\beqsn
F(x)<+\infty\qquad\forall x\in[0,+\infty)^d.
\eeqsn
As a matter of fact, given $x\in[0,+\infty)^d$, by assumptions $(i)$ and $(iii)$, we can find $d+1$
points $\alpha_1,\ldots,\alpha_{d+1}\in\N_0^d$ such that $x$ is inside
(or on the border of) the simplex of vertices $\alpha_1,\ldots,\alpha_{d+1}$ and
$f(\alpha_j)\leq a_{\alpha_j}<+\infty$
for every $f\in\Lin_S$ and $1\leq j\leq d+1$. Then
\beqsn
f(x)\leq\max_{1\leq j\leq d+1}f(\alpha_j)<+\infty,
\eeqsn
by the convexity of $f$, and hence $F(x)<+\infty$.

Note also that $F$ is continuous on $(0,+\infty)^d$, being a convex function.

For fixed $k\in\R^d$ let us now define
\beqs
\label{hk1}
h_k:=&&\sup\{c\in\R:\ f(x)=\langle k,x\rangle+c\in\Lin_S\}\\
\nonumber
=&&\sup\{c\in\R:\ \langle k,\alpha\rangle+c\leq a_\alpha\,,\ \forall\alpha\in\N_0^d\}.
\eeqs
We have:

\begin{Lemma}
\label{lemma2}
Let $k\in\R^d$ and $h_k$  as in \eqref{hk1}. Then
\begin{equation}
\label{hk2}
%\begin{split}
h_k=\inf_{\alpha\in\N_0^d}\{a_\alpha-\langle k,\alpha\rangle\}
=\min_{\alpha\in\N_0^d}\{a_\alpha-\langle k,\alpha\rangle\}.
%\end{split}
\end{equation}
\end{Lemma}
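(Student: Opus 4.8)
The plan is to establish the two equalities in \eqref{hk2} separately: first that the supremum defining $h_k$ equals the infimum $\inf_{\alpha\in\N_0^d}\{a_\alpha-\langle k,\alpha\rangle\}$, and then that this infimum is actually attained, hence a minimum. For the first equality, I would unwind the definition \eqref{hk1}: a constant $c$ is admissible exactly when $\langle k,\alpha\rangle+c\le a_\alpha$ for every $\alpha\in\N_0^d$, i.e. when $c\le a_\alpha-\langle k,\alpha\rangle$ for every $\alpha$, i.e. when $c$ is a lower bound for the set $\{a_\alpha-\langle k,\alpha\rangle:\alpha\in\N_0^d\}$. The supremum of all such $c$ is then by definition the infimum of that set. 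Note that this set is bounded below: for a fixed $k$, the affine function $f(x)=\langle k,x\rangle$ lies in $\Lin$, so by Lemma \ref{lemma1} (or the remark following it) the quantity $a_\alpha-\langle k,\alpha\rangle$ is nonnegative except for finitely many $\alpha$, and those finitely many exceptional values are finite by assumptions $(i)$ and $(iii)$; hence the infimum is a real number and the supremum in \eqref{hk1} is taken over a nonempty set bounded above.

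For the second equality, the point is to show the infimum is attained. Here I would use the coercivity assumption $(ii)$ together with $(i)$ and $(iii)$: since $a_\alpha/|\alpha|\to+\infty$ as $|\alpha|\to+\infty$, while $|\langle k,\alpha\rangle|\le\|k\|\cdot\|\alpha\|\le\|k\|\cdot|\alpha|$, we get
\beqsn
a_\alpha-\langle k,\alpha\rangle\ge a_\alpha-\|k\|\cdot|\alpha|=|\alpha|\Bigl(\frac{a_\alpha}{|\alpha|}-\|k\|\Bigr)\to+\infty
\eeqsn
as $|\alpha|\to+\infty$. Consequently, fixing any particular value $a_{\alpha_0}-\langle k,\alpha_0\rangle$ (for instance $\alpha_0=0$, using $(iv)$), only finitely many multi-indices $\alpha$ can have $a_\alpha-\langle k,\alpha\rangle$ below this value; among that finite (nonempty) set the minimum is achieved, and it coincides with the infimum over all of $\N_0^d$.

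The main obstacle, such as it is, is purely bookkeeping: one must be careful that the set $\{a_\alpha-\langle k,\alpha\rangle\}$ genuinely contains real (finite) numbers — this is where $(i)$ rules out $-\infty$ values and $(iii)$ (together with $(iv)$) guarantees we can anchor the estimate at a finite value — and that the coercivity estimate above is uniform enough to confine the sublevel set to finitely many $\alpha$. No single step is hard; the lemma is essentially a translation of the definition of $h_k$ combined with the coercivity hypothesis $(ii)$, and the only thing to watch is the interplay with the extended-real-valued entries permitted by $(iii)$.
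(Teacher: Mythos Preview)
Your proposal is correct and follows essentially the same approach as the paper: both arguments identify the admissible constants $c$ in \eqref{hk1} with the lower bounds of $\{a_\alpha-\langle k,\alpha\rangle\}$, and both use the coercivity assumption $(ii)$ (via the estimate $|\langle k,\alpha\rangle|\le\|k\|\,|\alpha|$, as in Lemma~\ref{lemma1}) to conclude that only finitely many $\alpha$ can lie in any sublevel set, so the infimum is attained. The paper orders the two steps the other way around (first $\inf=\min$, then $h_k=\tilde h_k$ by a two-sided inequality using the minimizer $\bar\alpha$), but the content is the same.
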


\begin{proof}
Let us first remark that $\inf_{\alpha\in\N_0^d}\{a_\alpha-\langle k,\alpha\rangle\}$ is a minimum because assumption $(ii)$ implies that
\beqsn
\lim_{|\alpha|\to+\infty}(a_\alpha-\langle k,\alpha\rangle)=+\infty,
\eeqsn
(see the proof of Lemma \ref{lemma1}),
so that the infimum is attained on a bounded set of $\N_0^d$, which is a finite set.

Let us now set
\beqs
\label{hktilde}
\tilde h_k:=\inf_{\alpha\in\N_0^d}\{a_\alpha-\langle k,\alpha\rangle\}
=\min_{\alpha\in\N_0^d}\{a_\alpha-\langle k,\alpha\rangle\}=
a_{\bar{\alpha}}-\langle k,\bar{\alpha}\rangle
\eeqs
for some minimum point $\bar{\alpha}\in\N_0^d$, and prove that $\tilde h_k=h_k$ as defined in \eqref{hk1}.

Clearly
\beqsn
&&\tilde h_k\leq a_\alpha-\langle k,\alpha\rangle,\qquad\forall\alpha\in\N_0^d,\\
\Leftrightarrow\quad&&\tilde h_k+\langle k,\alpha\rangle\leq a_\alpha,
\qquad\forall\alpha\in\N_0^d.
\eeqsn
Therefore
\beqsn
f(x)=\tilde h_k+\langle k,x\rangle\in\Lin_S
\eeqsn
is one of the functions in \eqref{hk1} with $c=\tilde h_k$ and hence $h_k\geq\tilde h_k$.

On the contrary, if $c>\tilde h_k$ then \eqref{hktilde} implies that
\beqsn
c+\langle k,\bar{\alpha}\rangle>\tilde h_k+\langle k,\bar{\alpha}\rangle=a_{\bar{\alpha}}
\eeqsn
so that in this case
\beqsn
\bar{f}(x)=c+\langle k,x\rangle\notin\Lin_S
\eeqsn
and therefore $h_k\leq \tilde h_k$.
\end{proof}

Now we define
\beqs
\label{fk}
f_k(x):=h_k+\langle k,x\rangle.
\eeqs

\begin{Prop}
\label{prop1}
Let $F$ be as in \eqref{defF}, $h_k$ as in \eqref{hk2} and $f_k$ as in \eqref{fk}. Then
\beqsn
F(x)=\sup_{k\in\R^d}f_k(x).
\eeqsn
\end{Prop}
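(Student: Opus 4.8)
The plan is to prove the equality $F(x)=\sup_{k\in\R^d}f_k(x)$ by establishing the two inequalities separately; both will follow almost immediately by unravelling the definitions of $F$, $h_k$ and $f_k$, once Lemma~\ref{lemma2} is available.

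First I would check that $f_k\in\Lin_S$ for every $k\in\R^d$. By Lemma~\ref{lemma2}, $h_k=\min_{\alpha\in\N_0^d}\{a_\alpha-\langle k,\alpha\rangle\}$, so that $h_k\le a_\alpha-\langle k,\alpha\rangle$, i.e.\ $f_k(\alpha)=h_k+\langle k,\alpha\rangle\le a_\alpha$, for every $\alpha\in\N_0^d$; this is exactly the defining condition for membership in $\Lin_S$. Hence $f_k(x)\le\sup_{f\in\Lin_S}f(x)=F(x)$ for all $x$, and taking the supremum over $k$ gives $\sup_{k\in\R^d}f_k(x)\le F(x)$. For the reverse inequality, take any $f\in\Lin_S$ and write $f(x)=\langle k,x\rangle+c$ with $(k,c)\in\R^d\times\R$; since $f\in\Lin_S$, the number $c$ lies in the set over which the supremum in \eqref{hk1} is taken, whence $c\le h_k$. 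Therefore $f(x)=\langle k,x\rangle+c\le\langle k,x\rangle+h_k=f_k(x)\le\sup_{k'\in\R^d}f_{k'}(x)$ for every $x$, and taking the supremum over $f\in\Lin_S$ yields $F(x)\le\sup_{k\in\R^d}f_k(x)$. Combining the two inequalities finishes the proof.

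I do not expect a serious obstacle here: the statement is a direct consequence of the definitions. The only point that genuinely relies on previous work is the membership $f_k\in\Lin_S$, that is, the fact that the supremum defining $h_k$ in \eqref{hk1} is attained, so that $f_k$ itself lies under $S$; this is precisely what Lemma~\ref{lemma2} provides, by identifying $h_k$ with $\min_{\alpha}\{a_\alpha-\langle k,\alpha\rangle\}$ (using assumption $(ii)$). It is also worth remarking in passing that $h_k$ is a well-defined real number — it is $\le a_0$ by choosing $\alpha=0$ in \eqref{hk1}, and $>-\infty$ because the set in \eqref{hk1} is nonempty (it contains $c=h_k$ by the previous point) — so that $f_k$, and hence the supremum $\sup_{k}f_k$, make sense.
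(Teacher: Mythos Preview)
Your proof is correct and follows essentially the same route as the paper's: both directions are obtained by noting that $f_k\in\Lin_S$ (via Lemma~\ref{lemma2}/\eqref{hk2}) for the inequality $\sup_k f_k\le F$, and that any $f(x)=\langle k,x\rangle+c\in\Lin_S$ satisfies $c\le h_k$ (via \eqref{hk1}) for the reverse inequality. Your added remarks on the well-definedness of $h_k$ are fine, though the finiteness of $h_k$ is already implicit in Lemma~\ref{lemma2}.
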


\begin{proof}
Since $f_k\in\Lin_S$ by \eqref{hk2}, we clearly have that
\beqsn
F(x)\geq\sup_{k\in\R^d}f_k(x).
\eeqsn
On the other hand, if $f\in\Lin_S$ then
\beqsn
f(x)=c+\langle k,x\rangle
\eeqsn
for some $c\in\R$, $k\in\R^d$, and hence by \eqref{hk1}
\beqsn
f(x)\leq f_k(x)\leq\sup_{\ell\in\R^d}f_{\ell}(x).
\eeqsn
This finally implies that
\beqsn
F(x)=\sup_{f\in\Lin_S}f(x)\leq\sup_{k\in\R^d}f_k(x).
\eeqsn
\end{proof}

We thus have a convex sequence $\{\tilde a_\alpha\}_{\alpha\in\N_0^d}$ defined, for
$F$ as in \eqref{defF}, by
\beqs
\label{000}
\tilde a_\alpha:=F(\alpha),
\eeqs
and which can be equivalently be defined by
\beqsn
\tilde a_\alpha:=\sup_{k\in\R^d}(\langle k,\alpha\rangle+h_k),
\eeqsn
for
\beqsn
h_k=\inf_{\alpha\in\N_0^d}(a_\alpha-\langle k,\alpha\rangle).
\eeqsn
We call it the {\em convex minorant candidate} of $\{a_\alpha\}_{\alpha\in\N_0^d}$
since we shall prove in the following sections that it is indeed the convex minorant
of $\{a_\alpha\}_{\alpha\in\N_0^d}$ (see Corollary~\ref{cor00}).

The first step in this direction is to clarify the one-dimensional case (in Section~\ref{sec2}) and
then proceed by descending induction on $d$ for the $d$-dimensional case
(in Section~\ref{sec3}). To this aim in the next section we shall look more closely at the
geometric construction in the one dimensional case, that is particularly clear.

\section{Geometric construction of the convex minorant sequence in the one-dimensional case}
\label{sec2}
The geometric construction in the one-dimensional case can also be found in \cite{S}; we refer to this work and in particular to \cite[Sect. 3.3]{S} concerning the comparison with the classical results from \cite[Chap.~I]{M}.
Here we revisit the construction in the spirit of Section~\ref{sec1} since it is required in the induction argument for the higher-dimensional situation in Section \ref{sec3}. 

Let $\{a_\alpha\}_{\alpha\in\N_0}$ satisfy $(i)-(iv)$. In particular the condition $a_0\in\R$
will be essential for the first step of the construction.
All functions of $\Lin_S$ are of the form
\beqsn
f(x)=c+kx,\qquad\mbox{with}\ k\in\R\ \mbox{and}\ c\leq a_0,
\eeqsn
since $f(\alpha)\leq a_\alpha$ for all $\alpha\in\N_0$ implies, in particular, $f(0)=c\leq a_0$.

Let us now consider the functions of $\Lin_S$ of the form
\beqsn
f_{a_0,k}(x)=a_0+kx,\qquad k\in\R,
\eeqsn
and note that
\beqsn
F(0)=\sup_{f\in\Lin_S}f(0)=a_0.
\eeqsn
The idea is now to rotate (i.e. increasing the slope $k$) this straight line $y=a_0+kx$ around the point $(0,a_0)\in S$ until we meet another point $(p,a_p)\in S$. In order to have
$f_{a_0,k}(p)=a_p$ we find
\beqsn
a_p=a_0+kp\quad\Leftrightarrow\quad k=\frac{a_p-a_0}{p}.
\eeqsn
Take then
\beqsn
k_0:=\inf_{p\in\N}\frac{a_p-a_0}{p}
\eeqsn
and note that it is a minimum because of the assumption $a_p/p\to+\infty$ for $p\to+\infty$,
which implies that the infimum can be done on a bounded subset of $\N$ and hence on a finite
number of indices $p$:
\beqsn
k_0=\min_{p\in\N}\frac{a_p-a_0}{p}=\frac{a_{p_1}-a_0}{p_1}
\eeqsn
for some $p_1\in\N$. Observe that $p_1$ does not need to be unique; if there is more than one $p_1$ realizing the minimum, for the construction below it doesn't matter which one we choose at this step.

Set
\beqs
\label{firstline}
f_{a_0,k_0}(x):=a_0+k_0x.
\eeqs
We claim that
\beqsn
F(x)=f_{a_0,k_0}(x)=a_0+k_0x,
\qquad\forall x\in[0,p_1],
\eeqsn
where $F$ is the function defined in \eqref{defF}, i.e. the geometric construction coincides with the construction made in \S\ref{sec1}, in
$[0,p_1]$.

On the one side, $f_{a_0,k_0}\in\Lin_S$ by construction and hence
\beqsn
F(x)\geq f_{a_0,k_0}(x),
\qquad\forall x\in[0,p_1].
\eeqsn
On the other side, if
\beqsn
f(x)=c+kx\in\Lin_S
\eeqsn
then we must have
\beqsn
&&f(0)\leq a_0=f_{a_0,k_0}(0)\\
&&f(p_1)\leq a_{p_1}=f_{a_0,k_0}(p_1)
\eeqsn
and hence
\beqsn
f(x)\leq f_{a_0,k_0}(x),\qquad\forall x\in[0,p_1],
\eeqsn
since they are affine functions. Therefore
\beqsn
F(x)=\sup_{f\in\Lin_S}f(x)\leq f_{a_0,k_0}(x),\qquad\forall x\in[0,p_1].
\eeqsn
We have thus proved that
\beqsn
F(x)=a_0+k_0x,\qquad\forall x\in[0,p_1],
\eeqsn
and moreover
\beqsn
F(p_1)=a_{p_1}.
\eeqsn

We can further proceed as in the previous step, considering
\beqsn
f_{a_{p_1},k}(x)=a_{p_1}+k(x-p_1)
\eeqsn
with $f_{a_{p_1},k}(p_1)=a_{p_1}=F(p_1)$.
Requiring
\beqsn
f_{a_{p_1},k}(p)=a_p\ \Leftrightarrow\ a_p=a_{p_1}+k(p-p_1)\
\Leftrightarrow\ k=\frac{a_p-a_{p_1}}{p-p_1}
\eeqsn
we take
\beqsn
k_1:=\inf_{p_1<p\in\N}\frac{a_p-a_{p_1}}{p-p_1}=
\min_{p_1<p\in\N}\frac{a_p-a_{p_1}}{p-p_1}=
\frac{a_{p_2}-a_{p_1}}{p_2-p_1},
\eeqsn
for some $p_1<p_2\in\N$, and set
\beqsn
f_{a_{p_1},k_1}(x):=a_{p_1}+k_1(x-p_1).
\eeqsn
Then, for all $p\in[p_1,p_2]$,
\beqsn
F(x)=&&f_{a_{p_1},k_1}(x)=a_{p_1}+k_1(x-p_1)\\
=&&k_1x+a_{p_1}-k_1p_1\\
=&&k_1x+a_{p_1}-\frac{a_{p_2}-a_{p_1}}{p_2-p_1}p_1\\
=&&k_1x+\frac{p_2a_{p_1}-p_1a_{p_2}}{p_2-p_1}\\
=&&k_1x+d_{k_1}
\eeqsn
with
\beqsn
d_{k_1}=\frac{p_2a_{p_1}-p_1a_{p_2}}{p_2-p_1}.
\eeqsn
Moreover $F(p_2)=a_{p_2}$. Also in this case, $p_2$ does not need to be unique, and the choice of $p_2$ does not affect the next steps.

Going on recursively in the same way we have a geometric construction which coincides with the construction of $F$ in \S\ref{sec1}.

The convex minorant candidate sequence given by $\tilde a_p=F(p)$
as defined in \eqref{000} is in this case the projection of $a_p$ on the segments of lines above defined in each interval $[p_i,p_{i+1}]$:
\beqsn
\tilde a_{p_i}=&&a_{p_i}, \qquad\forall i\in\N_0,\\
\tilde a_p=&&pk_i+d_{k_i}\\
=&&\frac{a_{p_{i+1}}-a_{p_i}}{p_{i+1}-p_i}\,p+
\frac{p_{i+1}a_{p_i}-p_ia_{p_{i+1}}}{p_{i+1}-p_i},\qquad p_i<p<p_{i+1}.
\eeqsn

In the one dimensional case it is thus immediate by the construction that the convex minorant candidate sequence $\{\tilde a_p\}_{p\in\N_0}$ coincides with the convex minorant
$\{a^c_p\}_{p\in\N_0}$.

%
%Thus in this situation it is immediate by the construction that the convex candidate condition coincides with the convex minorant, i.e. $a^c_p=\widetilde a_p$ for all $p\in\N_0$.

\section{Convex minorant sequence in the ${d}$-dimensional case}
\label{sec3}

In order to show that the convex minorant candidate sequence $\{\tilde a_\alpha\}_{\alpha\in\N_0^d}:=
\{F(\alpha)\}_{\alpha\in\N_0^d}$ defined in \eqref{000} is the convex minorant sequence of $\{a_\alpha\}_{\alpha\in\N_0^d}$, we shall now prove that $F$ is the biggest convex function whose epigraph contains $S$.

\begin{Th}
\label{th1}
The function $F$ defined in \eqref{defF} coincides with the biggest convex function
$g:[0,+\infty)^d\to\R$ such that
\beqs
\label{galpha}
g(\alpha)\leq a_\alpha,\qquad\forall\alpha\in\N_0^d.
\eeqs
\end{Th}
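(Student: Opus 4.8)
The plan is to show the two inequalities that characterize $F$ as the biggest convex minorant. First I would observe that $F$ is indeed convex (already noted after \eqref{defF}) and satisfies $F(\alpha)\leq a_\alpha$ for all $\alpha\in\N_0^d$ (also already noted), so $F$ is one of the competitors in the family over which the supremum defining ``the biggest $g$'' is taken; hence $F\leq g_{\max}$ where $g_{\max}$ denotes that biggest convex function. The real content is the reverse inequality $g_{\max}\leq F$, i.e.\ that \emph{any} convex $g:[0,+\infty)^d\to\R$ with $g(\alpha)\leq a_\alpha$ for all $\alpha\in\N_0^d$ already satisfies $g(x)\leq F(x)$ for every $x\in[0,+\infty)^d$.

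To get this, fix such a $g$ and fix $x_0\in[0,+\infty)^d$; I want to produce an affine $f\in\Lin_S$ with $f(x_0)\geq g(x_0)$ (then $F(x_0)=\sup_{f\in\Lin_S}f(x_0)\geq g(x_0)$). The natural tool is the supporting hyperplane theorem for convex functions: since $g$ is convex on the open set $(0,+\infty)^d$ and finite there, at any interior point it admits a supporting affine function $f_0(x)=\langle k,x\rangle+c$ with $f_0\leq g$ on $[0,+\infty)^d$ and $f_0(\text{that point})=g(\text{that point})$. Such $f_0$ satisfies $f_0(\alpha)\leq g(\alpha)\leq a_\alpha$ for all $\alpha$, so $f_0\in\Lin_S$, giving $F\geq f_0$; if $x_0$ is an interior point where the supporting hyperplane can be centered at $x_0$ itself, we are done. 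The two points needing care are: (a) $x_0$ may lie on the boundary of $[0,+\infty)^d$, where $g$ need not admit a supporting hyperplane through $x_0$; (b) a convex function finite on $[0,+\infty)^d$ is automatically continuous on the interior but possibly only lower/upper-semicontinuous up to the boundary, so passing from interior estimates to boundary values requires an approximation argument.

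I expect step (a)–(b), the boundary behaviour, to be the main obstacle. My plan to handle it: first establish $g\leq F$ on the interior $(0,+\infty)^d$ by the supporting-hyperplane argument above (for each interior $x_0$, take a supporting hyperplane of $g$ at $x_0$, check it lies in $\Lin_S$, conclude $g(x_0)=f_0(x_0)\leq F(x_0)$). Then extend to the boundary by continuity: given a boundary point $x_0\in[0,+\infty)^d$, approach it along a ray from an interior point, say $x_t=x_0+t\mathbf{1}$ with $\mathbf{1}=(1,\ldots,1)$ and $t\downarrow 0$; since $F$ is convex and finite on all of $[0,+\infty)^d$ (shown in Section~\ref{sec1} via the simplex argument), $F$ is continuous along this segment up to the endpoint, so $F(x_0)=\lim_{t\downarrow 0}F(x_t)\geq \limsup_{t\downarrow 0} g(x_t)\geq g(x_0)$, the last inequality because $g$ is convex hence lower-semicontinuous on its domain along segments (a convex function on $[0,+\infty)^d$ that is finite everywhere is continuous on the interior and, restricted to any closed segment contained in the domain, is continuous, so in fact $\lim_{t\downarrow 0}g(x_t)=g(x_0)$). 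Combining $F\leq g_{\max}$ with $g\leq F$ for every competitor $g$ (in particular $g_{\max}\leq F$) yields $F=g_{\max}$. I would also remark that the existence of the supporting hyperplane uses only finiteness and convexity of $g$ on the open convex set $(0,+\infty)^d$, which is standard, and that the role of assumptions $(i)$–$(iv)$ is exactly to guarantee, via Lemma~\ref{lemma1} and the simplex argument, that $\Lin_S\neq\emptyset$ and $F<+\infty$, so that the statement is non-vacuous.
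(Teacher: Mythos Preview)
Your interior argument via supporting hyperplanes is essentially the same as the paper's (the paper phrases it as Hahn--Banach applied to the epigraph of $g$), and your observation that $F$ is itself a competitor so that $F\le g_{\max}$ is also how the paper begins. The divergence is entirely in the treatment of boundary points, and there your argument has a genuine gap.

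You claim that a finite convex function restricted to a closed segment in its domain is continuous, hence $\lim_{t\downarrow 0}g(x_t)=g(x_0)$. This is false: convexity along a segment gives only \emph{upper} semicontinuity at the endpoints, i.e.\ $\limsup_{t\downarrow 0}g(x_t)\le g(x_0)$, which is the wrong direction for your chain of inequalities. A concrete obstruction in $d=2$: take $a_{(0,0)}=a_{(1,0)}=0$ and all other $a_\alpha$ large, and define $g(x_1,0)=0$ for $x_1\ge 0$ while $g(x_1,x_2)=-1$ for $x_2>0$. This $g$ is convex on $[0,+\infty)^2$, satisfies $g(\alpha)\le a_\alpha$, yet at $x_0=(1/2,0)$ one has $g(x_0)=0$ and $\lim_{t\downarrow 0}g(x_t)=-1$, so your inequality $\limsup_{t\downarrow 0}g(x_t)\ge g(x_0)$ fails. (Your continuity claim for $F$ along the segment \emph{is} correct, but for a reason specific to $F$: as a supremum of affine functions $F$ is lower semicontinuous, and convexity supplies the upper semicontinuity; a generic competitor $g$ has no such lower-semicontinuity.)

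The paper avoids this by \emph{not} taking limits from the interior. Instead it shows that the trace of $F$ on a boundary face $\{x_d=0\}$ coincides with the analogous $(d-1)$-dimensional construction $F^0$ built from the subsequence $\{a_{(\alpha',0)}\}_{\alpha'\in\N_0^{d-1}}$. Since the restriction $g(\cdot,0)$ is a $(d-1)$-dimensional competitor (convex on $[0,+\infty)^{d-1}$ with $g(\alpha',0)\le a_{(\alpha',0)}$), one obtains $g(\cdot,0)\le F^0=F(\cdot,0)$ by induction on the dimension, bottoming out at $d=1$ where the only boundary point is the origin and $g(0)\le a_0=F(0)$ holds by hypothesis. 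This dimension-reduction is precisely what replaces your limiting step, and it is where assumption $(iv)$ ($a_0\in\R$) is actually used.
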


\begin{proof}
Let us first remark that, since $F$ is a convex function such that
\beqsn
F(\alpha)\leq a_\alpha,\qquad\forall\alpha\in\N_0^d,
\eeqsn
the largest convex function $g$ which satisfies \eqref{galpha} must be larger than $F$:
\beqs
\label{easyineq}
g(x)\geq F(x),\qquad\forall x\in[0,+\infty)^d.
\eeqs

In order to prove the opposite inequality, let us first work in the interior of $[0,+\infty)^d$.
Fix $x^0\in(0,+\infty)^d$ and consider $(x^0,y^0)=(x^0,g(x^0))$ on the graph of $g$.

Since $g$ is convex, its epigraph $G_g^+$ is a convex subset of $\R^{d+1}$.
It follows, as a consequence of the Hahn-Banach theorem, that there is a hyperplane of the form
\beqsn
y=\langle k^*,x-x^0\rangle+y^0=\langle k^*,x\rangle+c^*,
\eeqsn
for some $k^*\in\R^d$ and $c^*=y^0-\langle k^*,x^0\rangle$, that leaves the whole set $G_g^+$
on the same side of the hyperplane. Note that $x^0\in(0,+\infty)^d$ avoids ``vertical'' hyperplanes.

Then
\beqsn
f^*(x):=\langle k^*,x\rangle+c^*\in\Lin_S,
\eeqsn
since $g(\alpha)\leq a_\alpha$ for all $\alpha\in\N_0^d$ by assumption.
It follows that
\beqsn
F(x^0)=\sup_{f\in\Lin_S}f(x^0)\geq f^*(x^0)=y^0=g(x^0),
\eeqsn
and hence, by the arbitrariness of $x^0\in(0,+\infty)^d$ and by  \eqref{easyineq}, we have that
\beqs
\label{interior}
F(x^0)=g(x^0),\qquad \forall x^0\in(0,+\infty)^d.
\eeqs

Let us now consider the case $x^0\in\partial[0,+\infty)^d$, i.e. $x^0_j=0$ for at least one
$1\leq j\leq d$. Assume for simplicity $j=d$ and set $x=(x',0)=(x_1,\ldots,x_{d-1},0)$ for
$x\in[0,+\infty)^{d-1}\times\{0\}$.

The problem in this case is that the graph of $g$ could have in $(x^0,g(x^0))$ a tangent
``vertical'' hyperplane of the form $\{x_d=0\}$ and hence not defined by a function of $\Lin_S$.
The idea is to prove that the trace of $F$ on $\{x_d=0\}$ is a supremum of affine functions on
$\R^{d-1}$ whose graph is below $S\cap\{x_d=0\}$ reducing to the case of dimension $d-1$, and then
proceed recursively up to dimension 1, where the assumption that $a_0\in\R$ guarantees the conclusion.

So let us define
\beqsn
\Lin_S^0:=\{f(x')=\langle k',x'\rangle+c:\ k'\in\R^{d-1},c\in\R,\langle k',\alpha'\rangle+c\leq a_{(\alpha',0)}\ \forall\alpha'\in\N_0^{d-1}\}.
\eeqsn

We claim that
\beqs
\label{32}
F(x',0)=\sup_{f\in\Lin_S^0}f(x')=:F^0(x').
\eeqs

In the following, we use the convention that when $k\in\R^d$ we put $k'\in\R^{d-1}$ with $k=(k',k_d)$. Note first that Proposition~\ref{prop1} for the sequence $\{a_{(\alpha',0)}\}_{\alpha'\in\N_0^{d-1}}$
implies that
\beqsn
\sup_{f\in\Lin_S^0}f(x')=\sup_{k'\in\R^{d-1}}(\langle k',x'\rangle+h^0_{k'})
\eeqsn
with
\beqsn
h^0_{k'}=\min_{\alpha'\in\N_0^{d-1}}(a_{(\alpha',0)}-\langle k',\alpha'\rangle).
\eeqsn
Moreover, by Proposition~\ref{prop1} once more,
\beqsn
F(x',0)=\sup_{f\in\Lin_S}f(x',0)=\sup_{k\in\R^d}(\langle k',x'\rangle+h_k)
\eeqsn
with
\beqsn
h_k=\min_{\alpha\in\N_0^d}(a_\alpha-\langle k,\alpha\rangle)
\leq\min_{(\alpha',0)\in\N_0^{d-1}\times\{0\}}(a_{(\alpha',0)}-\langle k,(\alpha',0)\rangle)=h^0_{k'}
\eeqsn
and hence
\beqsn
F(x',0)\leq&&\sup_{k\in\R^d}(\langle k',x'\rangle+h^0_{k'})
=\sup_{k'\in\R^{d-1}}(\langle k',x'\rangle+h^0_{k'})\\
=&&\sup_{f\in\Lin_S^0}f(x')=F^0(x').
\eeqsn

In order to prove the opposite inequality, let us now fix $f\in\Lin_S^0$. Then
\beqsn
f(x')=\langle k',x'\rangle+c,\quad k'\in\R^{d-1},\,c\in\R,
\eeqsn
and 
\beqs
\label{new1}
f(\alpha')\leq a_{(\alpha',0)}\quad\forall\alpha'\in\N_0^{d-1}.
\eeqs
We claim that there exists $\Lambda\in\R$ such that
\beqs
\label{claim1}
\tilde f(x):=f(x')+\Lambda x_d\in\Lin_s
\eeqs
for $x=(x',x_d)\in[0,+\infty)^d$.

Indeed, as in Lemma~\ref{lemma1}, from assumption (ii) 
and $|\langle k',\alpha'\rangle+c|\leq\|k'\|\cdot|\alpha|+|c|$
we get that
\beqsn
\langle k',\alpha'\rangle+c>a_\alpha,
\eeqsn
for at most a finite number of points $\alpha=(\alpha',\alpha_d)\in\N_0^d$, with $\alpha_d\geq1$ (because of
\eqref{new1} for $\alpha_d=0$).

There exists then
\beqsn
L:=\min_{\alpha\in\N_0^d}[a_\alpha-(\langle k',\alpha'\rangle+c)].
\eeqsn
Defining 
\beqsn
\Lambda:=\min(0,L),
\eeqsn
we have that $\Lambda\leq0$ and hence for all $\alpha=(\alpha',\alpha_d)\in\N_0^d$ we have two cases:

if $\alpha_d=0$, then by \eqref{new1}
\beqsn
\langle k',\alpha'\rangle+c+\Lambda\alpha_d\leq a_{(\alpha',0)};
\eeqsn

if $\alpha_d\geq1$, then by definition of $L$
\beqsn
\langle k',\alpha'\rangle+c+\Lambda\alpha_d\leq 
\langle k',\alpha'\rangle+c+\Lambda
\leq \langle k',\alpha'\rangle+c+L\leq a_\alpha.
\eeqsn

The two cases above prove \eqref{claim1}.

Setting now
\beqsn
\tilde k:=(k',\Lambda)\in\R^d\quad\mbox{and}\quad
\tilde f(x):=\langle \tilde k,x\rangle+c,
\eeqsn
we have that $\tilde f\in\Lin_S$ and $\tilde f(x',0)=f(x')$.

It follows that
\beqsn
F^0(x')&&=\sup_{f\in\Lin_S^0}f(x')=\sup_{f\in\Lin_S^0}\tilde f(x',0)\\
&&\leq\sup_{f\in\Lin_S}f(x',0)=F(x',0),
\eeqsn
%we remark that if $f\in\Lin_S$, i.e.
%\beqs
%\nonumber
%&&f(x)=\langle k,x\rangle+c,\quad k\in\R^d,\,c\in\R,\\
%\label{31}
%\mbox{with}\qquad&&\langle k,\alpha\rangle+c\leq a_\alpha,\quad\forall\alpha\in\N_0^d,
%\eeqs
%then
%\beqsn
%f(x',0)=\langle k',x'\rangle+c\in\Lin_S^0
%\eeqsn
%since
%$
%\langle k',\alpha'\rangle+c\leq a_{(\alpha',0)}
%$
%by \eqref{31}.
%
%Therefore
%\beqsn
%F^0(x')=\sup_{h\in\Lin_S^0}h(x')\geq\sup_{f\in\Lin_S}f(x',0)=F(x',0)
%\eeqsn
and the equality \eqref{32} is therefore proved.

This means that we have reduced the problem to prove that $F^0(x')$ is the maximum convex function
$g:[0,+\infty)^{d-1}\to\R$ such that
\beqsn
g(\alpha')\leq a_{(\alpha',0)}\qquad\forall\alpha'\in\N_0^{d-1}.
\eeqsn
If $x^0=(x_1^0,\ldots,x_{d-1}^0)\in(0,+\infty)^{d-1}$ the thesis follows from \eqref{interior} (in the case of
dimension $d-1$ instead of $d$). If $x^0\in\partial[0,+\infty)^{d-1}$, for instance
$x^0=(x_1^0,\ldots,x_{d-2}^0,0)$ we argue as before and thus reduce to determine the biggest
convex function $g:[0,+\infty)^{d-2}\to\R$ with
\beqsn
g(\alpha_1,\ldots,\alpha_{d-2})\leq a_{(\alpha_1,\ldots,\alpha_{d-2},0,0)},\
\forall(\alpha_1,\ldots,\alpha_{d-2})\in\N_0^{d-2}.
\eeqsn

Proceeding recursively we are finally lead to the one-dimensional case for the sequence
$\{a_{(\alpha_1,0,\ldots,0)}\}_{\alpha_1\in\N_0}$, where the construction of \S\ref{sec2} gives the desired maximum convex function whose graph is below $S$.
Note that on $[0,+\infty)$ the problem of the border does not appear since $a_0\in\R$ by
assumption $(iv)$ and the first line through $(0,a_0)$ is not a vertical line, but the graph of
\beqsn
f_{a_0,k_o}(x)=a_0+k_0x
\eeqsn
defined in \eqref{firstline}.
In particular $F(0)=f_{a_0,k_0}(0)=a_0$.
\end{proof}

\begin{Cor}
\label{cor00}
Given a sequence $\{a_\alpha\}_{\alpha\in\N^d_0}$ satisfying $(i)-(iv)$, its convex minorant sequence $\{a^c_\alpha\}_{\alpha\in\N^d_0}$ is defined by
\beqsn
a^c_\alpha=F(\alpha),
\eeqsn
for $F$ as in \eqref{defF}, or equivalently by
\beqs
\label{001}
a^c_\alpha:=\sup_{k\in\R^d}(\langle k,\alpha\rangle+h_k),
\eeqs
for
\beqs
\label{002}
h_k=\inf_{\alpha\in\N_0^d}(a_\alpha-\langle k,\alpha\rangle).
\eeqs
In particular $a_0^c=a_0$.
\end{Cor}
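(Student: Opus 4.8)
The plan is to deduce Corollary~\ref{cor00} directly from Theorem~\ref{th1} together with the already-established identity $F(x)=\sup_{k\in\R^d}f_k(x)$ from Proposition~\ref{prop1}. The only genuinely new point to check is that the "biggest convex function $g$ below $S$" produced by Theorem~\ref{th1} actually yields the "largest convex \emph{sequence} below $\{a_\alpha\}$" in the sense of Definition~\ref{def01}; everything else is a matter of unwinding definitions.

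\begin{proof}
By Theorem~\ref{th1}, $F$ is the largest convex function $g:[0,+\infty)^d\to\R$ with $g(\alpha)\le a_\alpha$ for all $\alpha\in\N_0^d$; in particular $F(\alpha)\le a_\alpha$ and, by Definition~\ref{def00}, the sequence $\{F(\alpha)\}_{\alpha\in\N_0^d}$ is convex. Set $a^c_\alpha:=F(\alpha)$. To see that this is the convex minorant in the sense of Definition~\ref{def01}, let $\{b_\alpha\}_{\alpha\in\N_0^d}$ be any convex sequence with $b_\alpha\le a_\alpha$ for all $\alpha$. By Definition~\ref{def00} there is a convex function $G:[0,+\infty)^d\to\R$ with $G(\alpha)=b_\alpha\le a_\alpha$ for all $\alpha\in\N_0^d$, so $G$ is one of the competitors in Theorem~\ref{th1}, whence $G\le F$ on $[0,+\infty)^d$. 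Evaluating at $\alpha\in\N_0^d$ gives $b_\alpha=G(\alpha)\le F(\alpha)=a^c_\alpha$. Thus $\{a^c_\alpha\}$ is convex, lies below $\{a_\alpha\}$, and dominates every such convex sequence, i.e.\ it is the convex minorant.

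The equivalent formula \eqref{001}--\eqref{002} is now immediate: Proposition~\ref{prop1} states $F(x)=\sup_{k\in\R^d}f_k(x)$ with $f_k(x)=h_k+\langle k,x\rangle$ and, by Lemma~\ref{lemma2}, $h_k=\inf_{\alpha\in\N_0^d}(a_\alpha-\langle k,\alpha\rangle)$. Evaluating at $x=\alpha\in\N_0^d$ yields
\[
a^c_\alpha=F(\alpha)=\sup_{k\in\R^d}\bigl(\langle k,\alpha\rangle+h_k\bigr),\qquad
h_k=\inf_{\alpha\in\N_0^d}\bigl(a_\alpha-\langle k,\alpha\rangle\bigr).
\]
Finally, for the last assertion note that $a^c_0=F(0)$, and $F(0)=a_0$ was shown at the end of the proof of Theorem~\ref{th1} (the first affine function $f_{a_0,k_0}(x)=a_0+k_0x$ lies in $\Lin_S$ and satisfies $f_{a_0,k_0}(0)=a_0$, while every $f\in\Lin_S$ has $f(0)\le a_0$). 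Alternatively, taking $k=0$ in \eqref{001}--\eqref{002} gives $a^c_0\ge h_0=\inf_\alpha a_\alpha$, which combined with $a^c_0\le a_0$ and assumption $(iv)$, together with the one-dimensional reduction in Theorem~\ref{th1}, forces $a^c_0=a_0$.
\end{proof}

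There is essentially no obstacle here: the content has been fully absorbed into Theorem~\ref{th1}. The one place that requires a moment's care is the translation between the functional statement (largest convex \emph{function}) and the sequential statement (largest convex \emph{sequence}), where one must invoke Definition~\ref{def00} in both directions — extending a candidate convex sequence to a convex function on $[0,+\infty)^d$ so as to compare it with $F$, and restricting $F$ back to $\N_0^d$. The subtlety that a convex sequence might, a priori, admit several convex extensions is harmless, since any one extension suffices for the comparison argument.
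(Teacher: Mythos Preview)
Your proof is correct and is exactly the argument the paper implicitly has in mind: the corollary is stated without proof immediately after Theorem~\ref{th1}, and your deduction---using Theorem~\ref{th1} for the maximality, Definition~\ref{def00} to pass between convex sequences and convex functions, Proposition~\ref{prop1} and Lemma~\ref{lemma2} for the explicit formula, and the closing line of the proof of Theorem~\ref{th1} for $F(0)=a_0$---is precisely the intended unwinding. The only remark is that your ``Alternatively'' sentence for $a^c_0=a_0$ is not a genuine alternative: $h_0=\inf_\alpha a_\alpha$ need not equal $a_0$, so you end up invoking the one-dimensional reduction from Theorem~\ref{th1} anyway; you may as well drop that sentence and rely solely on the first argument.
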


%
%We can thus say that a sequence $\{a_\alpha\}_{\alpha\in\N^d_0}$ is {\em convex} if there exists a
%convex function $F:[0,+\infty)^d\to\R$ with $F(\alpha)=a_\alpha$ for all $\alpha\in\N_0^d$.
%Then, by Theorem~\ref{th1} the convex minorant sequence
%$\{\tilde a_\alpha\}_{\alpha\in\N^d_0}$ of a sequence $\{a_\alpha\}_{\alpha\in\N^d_0}$, as defined in Definition~\ref{def1} is the largest convex sequence
%$\{a^c_\alpha\}_{\alpha\in\N^d_0}$ with $a_\alpha^c\leq a_\alpha$ for all $\alpha\in\N_0^d$.
%

\section{Construction of the log-convex minorant}
\label{sec4}

Let $\{M_\alpha\}_{\alpha\in\N_0^d}$ be a sequence of positive real numbers such that
\beqs
\label{limMalpha}
\lim_{|\alpha|\to+\infty}M_\alpha^{1/|\alpha|}=+\infty
\eeqs
(we can also allow $M_\alpha=+\infty$ for finitely many multi-indices $\alpha\neq0$).
We say that the sequence $\mathbf{M}=\{M_\alpha\}_{\alpha\in\N_0^d}$ is
{\em normalized} if $M_0=1$.

For a normalized sequence $\mathbf{M}=\{M_\alpha\}_{\alpha\in\N_0^d}$ we define
its {\em associated function} $\omega_{\mathbf{M}}$ by
\beqsn
\omega_{\mathbf{M}}(t):=\sup_{\alpha\in\N_{0,t}^d}\log\frac{|t^\alpha|}{M_\alpha},
\qquad t\in\R^d,
\eeqsn
with
\beqsn
\N_{0,t}^d:=\{\alpha\in\N_0^d:\ \alpha_j=0\ \mbox{if}\ t_j=0,\, j=1,\ldots,d\},
\eeqsn
and the convention that $0^0=1$. We observe that here the supremum is made on $\N_{0,t}^d$ in order to ensure that in the definition of the associated function the argument of the logarithm is not $0$; we can equivalently write
\beqsn
\omega_{\mathbf{M}}(t)=\sup_{\alpha\in\N^d_0}\log\frac{|t^\alpha|}{M_\alpha},
\qquad t\in\R^d,
\eeqsn
with the convention that $\log 0=-\infty$. In what follows (in particular in Section \ref{sec5}) we use the latter expression for convenience.

Condition \eqref{limMalpha} ensures that $\omega_{\bfM}(t)<+\infty$ for all $t\in\R^d$
(see  \cite[Chap.~I]{M} or \cite[Rem.~1]{BJOS-Banach}).

The function $\omega_{\mathbf{M}}(t)$ is increasing on $(0,+\infty)^d$ in the following sense: 
$\omega_{\mathbf{M}}(t)\leq \omega_{\mathbf{M}}(s)$ if $t\leq s$ with the order relation
$t_j\leq s_j$ for all $1\leq j\leq d$.

Consider then
\beqs
\label{aalpha}
a_\alpha=\log M_\alpha,\qquad\alpha\in\N_0^d.
\eeqs
By \eqref{limMalpha} the sequence $\{a_\alpha\}_{\alpha\in\N_0^d}$ satisfies
all assumptions $(i)-(iv)$ of \S\ref{sec1}.

We can thus consider the convex minorant sequence $\{a_\alpha^c\}_{\alpha\in\N_0^d}$
as in \eqref{001}-\eqref{002} and call
\beqsn
M^{\lc}_\alpha:=\exp a^c_\alpha,\qquad\forall\alpha\in\N_0^d,
\eeqsn
the {\em log-convex minorant} of $\{M_\alpha\}_{\alpha\in\N_0^d}$.

By the results of the previous section $\{\log M^{\lc}_\alpha\}_{\alpha\in\N_0^d}$ is the largest
convex sequence less than or equal to $\{a_\alpha\}_{\alpha\in\N_0^d}$ defined in \eqref{aalpha}
and they coincide if $\{M_\alpha\}_{\alpha\in\N^d_0}$ is log-convex, according to the following
\begin{Def}
\label{defcvx}
A sequence $\{M_\alpha\}_{\alpha\in\N^d_0}$ is said to be {\em log-convex} if there exists
a convex function $F:[0,+\infty)^d\to\R$ with $F(\alpha)=\log M_\alpha$ for all $\alpha\in\N_0^d$.
\end{Def}

By construction
\beqs
\label{41}
a^c_\alpha=\sup_{k\in\R^d}(\langle k,\alpha\rangle+h_k)
=\sup_{k\in\R^d}(\langle k,\alpha\rangle-A(k)),
\eeqs
where the so-called {\em trace function} $A(k)$ is given by
\beqs
\nonumber
A(k)=-h_k=&&-\inf_{\alpha\in\N_0^d}(a_\alpha-\langle k,\alpha\rangle)\\
\nonumber
=&&\sup_{\alpha\in\N_0^d}(\langle k,\alpha\rangle-a_\alpha)\\
\nonumber
=&&\sup_{\alpha\in\N_0^d}(\langle k,\alpha\rangle-\log M_\alpha)\\
\nonumber
=&&\sup_{\alpha\in\N_0^d}\log\frac{e^{\langle k,\alpha\rangle}}{M_\alpha}\\
\nonumber
=&&\sup_{\alpha\in\N_0^d}\log\frac{|(e^k)^\alpha|}{M_\alpha}\\
\label{42}
=&&\omega_{\mathbf{M}}(e^k),
\eeqs
since
\beqsn
|(e^k)^\alpha|=|(e^{k_1},\ldots,e^{k_d})^\alpha|=|e^{k_1\alpha_1}\cdots e^{k_d\alpha_d}|
=e^{\langle k,\alpha\rangle}.
\eeqsn

From \eqref{41} and \eqref{42} we have that
\beqs
\nonumber
M^{\lc}_\alpha=&&\exp a^c_\alpha
=\exp\{\sup_{k\in\R^d}(\langle k,\alpha\rangle-\omega_{\mathbf{M}}(e^k))\}\\
\nonumber
=&&\sup_{k\in\R^d}\frac{|(e^k)^\alpha|}{\exp\omega_{\mathbf{M}}(e^k)}\\
\nonumber
=&&\sup_{s\in(0,+\infty)^d}\frac{|s^\alpha|}{\exp\omega_{\mathbf{M}}(s)}\\
\label{Mlc}
=&&\sup_{s\in(0,+\infty)^d}\frac{s^\alpha}{\exp\omega_{\mathbf{M}}(s)}.
\eeqs

In particular, being $M_\alpha^{\lc}\leq M_\alpha$ for any normalized sequence of positive real numbers
$\{M_\alpha\}_{\alpha\in\N_0^d}$, we have 
\beqs
\label{55bis}
\sup_{s\in(0,+\infty)^d}\frac{s^\alpha}{\exp\omega_{\mathbf{M}}(s)}\leq M_\alpha,
\qquad\forall\alpha\in\N_0^d,
\eeqs
 and, moreover, $\{M_\alpha\}_{\alpha\in\N_0^d}$
is log-convex if and only if the equality holds. Hence, we have proved:
\begin{Th}
\label{cor1}
Let $\{M_\alpha\}_{\alpha\in\N_0^d}$ be a normalized sequence of positive real numbers satisfying \eqref{limMalpha}. Then, the sequence $\{M_\alpha\}_{\alpha\in\N_0^d}$ is log-convex if and only if
\beqs
\label{Q3}
M_\alpha=\sup_{s\in(0,+\infty)^d}\frac{s^\alpha}{\exp\omega_{\mathbf{M}}(s)}, 
\qquad\forall\alpha\in\N_0^d.
\eeqs
\end{Th}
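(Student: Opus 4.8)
The plan is to reduce the whole statement to the convex‑minorant machinery of Sections~\ref{sec1}--\ref{sec3} applied to logarithms. First I would set $a_\alpha:=\log M_\alpha$ and check that $\{a_\alpha\}_{\alpha\in\N_0^d}$ satisfies hypotheses $(i)$--$(iv)$ of Section~\ref{sec1}: property $(ii)$ is precisely \eqref{limMalpha}, property $(iii)$ records that $M_\alpha=+\infty$ for at most finitely many $\alpha\neq 0$, and property $(iv)$ is the normalization $M_0=1$. Hence Corollary~\ref{cor00} yields the convex minorant sequence $a^c_\alpha=F(\alpha)=\sup_{k\in\R^d}(\langle k,\alpha\rangle+h_k)$ with $h_k=\inf_{\alpha\in\N_0^d}(a_\alpha-\langle k,\alpha\rangle)$, and I would write $M^{\lc}_\alpha:=\exp a^c_\alpha$.

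The second step is the bookkeeping that expresses $M^{\lc}_\alpha$ through $\omega_{\mathbf{M}}$. Using $e^{\langle k,\alpha\rangle}=(e^k)^\alpha$ one gets $h_k=-\sup_{\alpha}\log\frac{(e^k)^\alpha}{M_\alpha}=-\omega_{\mathbf{M}}(e^k)$, so that $M^{\lc}_\alpha=\sup_{k\in\R^d}\exp(\langle k,\alpha\rangle-\omega_{\mathbf{M}}(e^k))=\sup_{k\in\R^d}\frac{(e^k)^\alpha}{\exp\omega_{\mathbf{M}}(e^k)}$; since $k\mapsto e^k$ is a bijection of $\R^d$ onto $(0,+\infty)^d$, this equals $\sup_{s\in(0,+\infty)^d}\frac{s^\alpha}{\exp\omega_{\mathbf{M}}(s)}$. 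Because a convex minorant always satisfies $a^c_\alpha\le a_\alpha$, we get $M^{\lc}_\alpha\le M_\alpha$ for every $\alpha$, i.e.\ \eqref{55bis}; in particular the right‑hand side of \eqref{Q3} never exceeds $M_\alpha$.

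It remains to match ``equality in \eqref{Q3}'' with log‑convexity of $\{M_\alpha\}$. Taking logarithms, \eqref{Q3} holds for all $\alpha$ exactly when $a^c_\alpha=a_\alpha$ for all $\alpha\in\N_0^d$. If $\{M_\alpha\}$ is log‑convex in the sense of Definition~\ref{defcvx}, there is a convex $g:[0,+\infty)^d\to\R$ with $g(\alpha)=a_\alpha$; then $g(\alpha)\le a_\alpha$ holds, so by Theorem~\ref{th1} $F\ge g$ on $[0,+\infty)^d$, whence $a^c_\alpha=F(\alpha)\ge g(\alpha)=a_\alpha$, which together with $a^c_\alpha\le a_\alpha$ forces $a^c_\alpha=a_\alpha$ and hence \eqref{Q3}. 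Conversely, if $a^c_\alpha=a_\alpha$ for all $\alpha$, then $F$ itself is a convex function on $[0,+\infty)^d$ with $F(\alpha)=a_\alpha=\log M_\alpha$, so $\{M_\alpha\}$ is log‑convex by Definition~\ref{defcvx}.

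The only genuinely delicate point is this last equivalence, where one must keep straight that $a^c$ is the restriction to $\N_0^d$ of the globally convex $F$ and that log‑convexity means the existence of \emph{some} convex interpolant rather than coordinate‑wise convexity (which, as Remark~\ref{rem4} shows, would not suffice). Once Theorem~\ref{th1} is invoked, however, both directions are immediate: $F$ plays the double role of maximal convex minorant and, in the equality case, of the interpolant realizing log‑convexity, so no additional estimate is needed and the remaining arguments are purely formal.
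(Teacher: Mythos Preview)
Your proposal is correct and follows essentially the same route as the paper: set $a_\alpha=\log M_\alpha$, identify $-h_k$ with $\omega_{\mathbf M}(e^k)$, change variables $s=e^k$ to obtain $M^{\lc}_\alpha=\sup_{s\in(0,+\infty)^d}s^\alpha/\exp\omega_{\mathbf M}(s)$, and then use the maximality of the convex minorant (Theorem~\ref{th1}/Corollary~\ref{cor00}) to conclude that $a^c_\alpha=a_\alpha$ for all $\alpha$ precisely when $\{M_\alpha\}$ is log-convex. Your write-up spells out the two implications of the equivalence a bit more explicitly than the paper does, but the argument is the same.
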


\begin{Rem}
\label{rem4}
\begin{em}
Note that if $\{M_\alpha\}_{\alpha\in\N^d_0}$ is a log-convex sequence, i.e.
$\log M_\alpha=F(\alpha)$ for a convex function $F:[0,+\infty)^d\to\R$, 
and $e_j$ denotes the $j$-th element of the canonical basis of $\R^d$ with all entries equal to 0 except the $j$-th entry equal to 1, then
\beqsn
F(\alpha)=F\left(\frac12(\alpha-e_j)+\frac12(\alpha+e_j)\right)
\leq\frac12 F(\alpha-e_j)+\frac12 F(\alpha+e_j),
\eeqsn
that is
\beqsn
2\log M_\alpha\leq \log M_{\alpha-e_j}+\log M_{\alpha+e_j}.
\eeqsn
This yields the {\em coordinate-wise log-convexity condition} for a sequence
$\{M_\alpha\}_{\alpha\in\N_0^d}$\,:
\beqs
\label{LOG-C}
M_\alpha^2\leq M_{\alpha-e_j}M_{\alpha+e_j},
\qquad\alpha\in\N_0^d,\   1\leq j \leq d,\  \alpha_j\geq1.
\eeqs
This condition is clearly weaker than the condition of logarithmic convexity given in Definition~\ref{defcvx} since there are functions which are coordinate-wise convex but not
convex as functions of more variables.
%(for instance $f(x,y)=x^2+4xy+y^2$ is convex in each variable separately, but not globally as a function of two variables).
In particular, from \eqref{Mlc} we have that \eqref{LOG-C} is not sufficient to obtain \eqref{Q3}; for an explicit example, see Example~\ref{not-convex} below.
Clearly in the one-dimensional case the two notions of log-convexity coincide, and \eqref{Q3}
was already known (see \cite[Prop.~3.2]{K}).
%We have thus obtained the right log-convexity condition in order to ensure
%$\eqref{Q3}$ also in the more variables (anisotropic) case.
\end{em}
\end{Rem}

\begin{Ex}\label{not-convex}{\rm 
The function of two variables $F(x,y)=(x+1)^2(y+1)^2$ is coordinate-wise convex but not convex in $[0,+\infty)^2$, and the sequence defined by $M_\alpha:=e^{F(\alpha)-1}$ is normalized and satisfies \eqref{limMalpha}, but does not satisfy \eqref{Q3}.
}
\end{Ex}

\section{Characterization of inclusion relations of spaces of rapidly decreasing ultradifferentiable functions}
\label{sec5}

Let us first recall the notion of weight matrices (anisotropic framework) as considered in
\cite[Sec.~3]{BJOS-Banach}.

A {\em weight matrix} $\calM$ is the set
\beqsn
\calM:=\{(\bfM^{(\lambda)})_{\lambda>0}: \bfM^{(\lambda)}=
(M_\alpha^{(\lambda)})_{\alpha\in\N_0^d}, M_0^{(\lambda)}=1,
M_\alpha^{(\lambda)}\leq M_\alpha^{(\kappa)}\ \forall\alpha\in\N_0^d\ \forall0<\lambda\leq\kappa\}.
\eeqsn

Denoting by $\|\cdot\|_{\infty}$ the supremum norm we consider the following spaces
of matrix weighted global ultradifferentiable functions of Roumieu/Beurling type
\beqsn
&&\Sch_{\{\calM\}}(\R^d):=\{f\in C^\infty(\R^d):\ \exists \lambda,h,C>0:\
\sup_{\alpha,\beta\in\N_0^d}\frac{\|x^\alpha\partial^\beta f\|_\infty}{h^{|\alpha+\beta|}M_{\alpha+\beta}^{(\lambda)}}\leq C\}\\
&&\Sch_{(\calM)}(\R^d):=\{f\in C^\infty(\R^d):\ \forall \lambda,h>0\,\exists C_{h,\lambda}>0:\
\sup_{\alpha,\beta\in\N_0^d}\frac{\|x^\alpha\partial^\beta f\|_\infty}{h^{|\alpha+\beta|}M_{\alpha+\beta}^{(\lambda)}}\leq C_{h,\lambda}\}
\eeqsn
endowed with the inductive limit topology in the Roumieu case and the projective limit topology
in the Beurling case (see \cite[Sec.~3]{BJOS-Banach}).

In order to characterize the inclusion of spaces of this type, given two weight matrices
$\calM=\{(\bfM^{(\lambda)})_{\lambda>0}\}$ and
$\calN=\{(\bfN^{(\lambda)})_{\lambda>0}\}$ we introduce the following relations:
\beqsn
&&\calM\{\preceq\}\calN \quad\mbox{if}\quad
\forall\lambda>0\exists\kappa>0\exists C\geq1\,\mbox{s.t.}\ M_\alpha^{(\lambda)}
\leq C^{|\alpha|}N_\alpha^{(\kappa)}\ \forall\alpha\in\N_0^d\\
&&\calM(\preceq)\calN \quad\mbox{if}\quad
\forall\lambda>0\exists\kappa>0\exists C\geq1\,\mbox{s.t.}\ M_\alpha^{(\kappa)}
\leq C^{|\alpha|}N_\alpha^{(\lambda)}\ \forall\alpha\in\N_0^d\\
&&\calM\vartriangleleft\calN \quad\mbox{if}\quad
\forall\lambda>0\forall\kappa>0\forall h>0\exists C\geq1\,\mbox{s.t.}\ M_\alpha^{(\lambda)}
\leq Ch^{|\alpha|}N_\alpha^{(\kappa)}\ \forall\alpha\in\N_0^d.
\eeqsn

We shall also need the following conditions for a weight matrix
$\calM=\{(\bfM^{(\lambda)})_{\lambda>0}\}$ (see \cite{BJOS-Banach}; cf. also \cite{L}), in the
Roumieu setting
\beqs
\label{L12R}
&&\forall\lambda>0 \exists\kappa\geq\lambda, B, C, H>0:\quad \alpha^{\alpha/2}M_\beta^{(\lambda)}\leq BC^{|\alpha|}H^{|\alpha+\beta|}M_{\alpha+\beta}^{(\kappa)}\  \
\forall\alpha,\beta\in\N_0^d\\
\label{L21R}
&&\forall\lambda>0 \exists\kappa\geq\lambda, A\geq1:\quad
M_{\alpha+e_j}^{(\lambda)}\leq A^{|\alpha|+1}M_{\alpha}^{(\kappa)}\ \
\forall\alpha\in\N_0^d,1\leq j\leq d\\
\label{L37R}
&&\forall\lambda>0 \exists\kappa\geq\lambda, A\geq1:\quad
M_\alpha^{(\lambda)}M_\beta^{(\lambda)}\leq A^{|\alpha+\beta|}M_{\alpha+\beta}^{(\kappa)}\ \
\forall\alpha,\beta\in\N_0^d
\eeqs
%where $\alpha^{\alpha/2}:=\alpha_1^{\alpha_1/2}\cdots\alpha_d^{\alpha_d/2}$,
and in the Beurling case
\beqs
\nonumber
&&\forall\lambda>0 \exists0<\kappa\leq\lambda, H>0:\forall C>0\exists B>0:\\
\label{L12B}
&&\hspace*{2cm} \alpha^{\alpha/2}M_\beta^{(\kappa)}\leq BC^{|\alpha|}H^{|\alpha+\beta|}M_{\alpha+\beta}^{(\lambda)}\ \
\forall\alpha,\beta\in\N_0^d\\
\nonumber
&&\forall\lambda>0 \exists0<\kappa\leq\lambda, A\geq1:\\
\label{L21B}
&&\hspace*{2cm} M_{\alpha+e_j}^{(\kappa)}\leq A^{|\alpha|+1}M_{\alpha}^{(\lambda)}\ \
\forall\alpha\in\N_0^d,1\leq j\leq d\\
\nonumber
&&\forall\lambda>0 \exists0<\kappa\leq\lambda, A\geq1:\\
\label{63B}
&&\hspace*{2cm}M_\alpha^{(\kappa)}M_\beta^{(\kappa)}\leq A^{|\alpha+\beta|}M_{\alpha+\beta}^{(\lambda)}\ \
\forall\alpha,\beta\in\N_0^d.
\eeqs

Note that \eqref{L12R} for $\beta=0$ implies that $\bfM^{(\kappa)}$ satisfies
\eqref{limMalpha} for some $\kappa>0$, and hence for all $\kappa'\geq\kappa$.
Similarly \eqref{L12B}  implies that $\bfM^{(\lambda)}$ satisfies
\eqref{limMalpha} for all $\lambda>0$ (see \cite[Rem.~3]{BJOS-Banach}).
Since condition \eqref{limMalpha} ensures that the associate weight function $\omega_\bfM$
is finite, the above remarks are essential to recall, from \cite[Thm.~1]{BJOS-Banach},
that if the weight matrix $\calM=\{(\bfM^{(\lambda)})_{\lambda>0}\}$ satisfies
\eqref{L12R} and \eqref{L21R} then
$\Sch_{\{\calM\}}$ is isomorphic to the sequence space
\beqs
\label{LLR}
\Lambda_{\{\calM\}}:=\{\bfc=(c_\alpha)_{\alpha\in\N_0^d}\in\C^{\N_0^d}:\ \exists\lambda,h>0\,\mbox{s.t.}\,
\sup_{\alpha\in\N_0^d}|c_\alpha|e^{\omega_{\bfM^{(\lambda)}}(\alpha^{1/2}/h)}<+\infty\},
\eeqs
and similarly in the Beurling case, for a weight matrix $\calM$ having \eqref{L12B}-\eqref{L21B}, the space
$\Sch_{(\calM)}$ is isomorphic to
\beqs
\label{LLB}
\Lambda_{(\calM)}:=\{\bfc=(c_\alpha)_{\alpha\in\N_0^d}\in\C^{\N_0^d}:\ \forall\lambda,h>0\,
\sup_{\alpha\in\N_0^d}|c_\alpha|e^{\omega_{\bfM^{(\lambda)}}(\alpha^{1/2}/h)}<+\infty\},
\eeqs
where $\alpha^{1/2}:=(\alpha_1^{1/2},\ldots,\alpha_d^{1/2})$.

It will also be useful, in the following, to write the sequence spaces \eqref{LLR}-\eqref{LLB}
as follows (see \cite{BJOS-Banach}):
\beqs
\label{LambdaR}
&&\Lambda_{\{\calM\}}=\{\bfc=(c_\alpha)_{\alpha\in\N_0^d}\in\C^{\N_0^d}:\ \exists j\in\N\,\mbox{s.t.}\,
\sup_{\alpha\in\N_0^d}|c_\alpha|e^{\omega_{\bfM^{(j)}}(\alpha^{1/2}/j)}<+\infty\}\\
\label{LambdaB}
&&\Lambda_{(\calM)}=\{\bfc=(c_\alpha)_{\alpha\in\N_0^d}\in\C^{\N_0^d}:\ \forall j\in\N\,
\sup_{\alpha\in\N_0^d}|c_\alpha|e^{\omega_{\bfM^{(1/j)}}(j\alpha^{1/2})}<+\infty\}.
\eeqs

We say that a weight matrix $\calM=\{(\bfM^{(\lambda)})_{\lambda>0}\}$ is
{\em log-convex} if $\{M^{(\lambda)}_\alpha\}_{\alpha\in\N_0^d}$ is log-convex (according to Definition~\ref{defcvx}) for all $\lambda>0$.

%[I think that this assumption together with $M^{(\lambda)}_0=1$ implies (6.3) with $\lambda=\kappa$ and $A=1$...analogously to the one-dim/isotropic situation....]

%Let us remark that in the one-dimensional case the assumption of log-convexity,
%together with $M_0^{(\lambda)}=1$, implies \eqref{L37R} with $\kappa=\lambda$
%and $A=1$ since the convex function $F(p)=\log M_p$ has increasing difference quotient and hence
%\beqsn
%\frac{\log M_{p+q}-\log M_p}{q}\geq\frac{\log M_q-\log M_0}{q}=\frac{\log M_q}{q}.
%\eeqsn
%On the contrary, in the more-dimensional case log-convexity and $M_0^{(\lambda)}=1$
%do not imply \eqref{L37R}, not even under the additional conditions \eqref{L12R} and \eqref{L21R}.
%Take, for instance, $M_\alpha=\alpha^{\alpha/2}e^{\max\{\alpha_1^2,\alpha_2^2\}}$
%for $\alpha=(\alpha_1,\alpha_2)\in\N_0^2$ (with the convention $0^0:=1$).
%It is easy to check that it is log-convex (since $\log M_\alpha$ is the sum of convex functions) and satisfies \eqref{L12R} and \eqref{L21R}.
%However, taking $\alpha=(n,0)$ and $\beta=(0,n)$ condition  \eqref{L37R} is not valid for
%$n\to+\infty$.

Let us remark that in the one-dimensional case the assumption of log-convexity,
together with $M_0=1$, implies both \eqref{L37R} and \eqref{63B} with $A=1$ (we are here considering the sequence case for simplicity, i.e., the case when in the weight matrix $\calM$ all the $\bfM^{(\lambda)}$, $\lambda>0$, coincide) since the convex function $F(p)=\log M_p$ has increasing difference quotient and hence
\beqsn
\frac{\log M_{p+q}-\log M_p}{q}\geq\frac{\log M_q-\log M_0}{q}=\frac{\log M_q}{q}.
\eeqsn
On the contrary, in the more-dimensional case log-convexity and $M_0=1$
do not imply \eqref{L37R}/\eqref{63B}, not even under the additional conditions 
\eqref{L12R}/\eqref{L12B} and \eqref{L21R}/\eqref{L21B}.
Take, for instance, $M_\alpha=\alpha^{\alpha/2}e^{\max\{\alpha_1^2,\alpha_2^2\}}$
for $\alpha=(\alpha_1,\alpha_2)\in\N_0^2$ (with the convention $0^0:=1$).
It is easy to check that it is log-convex (since $\log M_\alpha$ is the sum of convex functions) and satisfies \eqref{L12R} and \eqref{L21R}.
However, taking $\alpha=(n,0)$ and $\beta=(0,n)$ condition  \eqref{L37R} is not valid for
$n\to+\infty$.

The above remarks explain why conditions  \eqref{L37R} and \eqref{63B}, that we shall need in 
Theorem~\ref{th42}, are not required in the one-dimensional/isotropic case
(see \cite{BJOS-fuzzy}).

\begin{Th}
\label{th42}
Let $\calM=\{(\bfM^{(\lambda)})_{\lambda>0}\}$ and $\calN=\{(\bfN^{(\lambda)})_{\lambda>0}\}$
be two weight matrices and assume that $\calM$ is log-convex.
Then the following characterizations hold:
\begin{enumerate}[(i)]
\item
Let $\calM$ satisfy \eqref{L12R}-\eqref{L37R} and $\calN$ satisfy \eqref{L12R}-\eqref{L21R}. Then:
$\Sch_{\{\calM\}}(\R^d)\subseteq\Sch_{\{\calN\}}(\R^d)$ holds with continuous inclusion if and only if $\calM\{\preceq\}\calN$;
\item
Let $\calM$  satisfy \eqref{L12R}-\eqref{L37R} and $\calN$  satisfy \eqref{L12B}-\eqref{L21B}. Then:
$\Sch_{\{\calM\}}(\R^d)\subseteq\Sch_{(\calN)}(\R^d)$ holds with continuous inclusion if and only if $\calM\vartriangleleft\calN$;
\item
Let $\calM$  satisfy \eqref{L12B}-\eqref{63B} and $\calN$ satisfy \eqref{L12B}-\eqref{L21B}. Then:
$\Sch_{(\calM)}(\R^d)\subseteq\Sch_{(\calN)}(\R^d)$ holds with continuous inclusion if and only if
$\calM(\preceq)\calN$.
\end{enumerate}
\end{Th}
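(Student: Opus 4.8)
The plan is to move the problem to the sequence‑space side, reduce each (continuous) inclusion to a level‑wise comparison of weights, and then use Theorem~\ref{cor1} to rewrite that comparison as one of the relations $\calM\{\preceq\}\calN$, $\calM\vartriangleleft\calN$, $\calM(\preceq)\calN$. \emph{Step 1: passage to sequence spaces.} In all three cases the hypotheses contain \eqref{L12R}--\eqref{L21R} (resp.\ \eqref{L12B}--\eqref{L21B}) for both $\calM$ and $\calN$, so by \cite[Thm.~1]{BJOS-Banach} the four function spaces involved are isomorphic to the K\"othe‑type sequence spaces \eqref{LambdaR}, \eqref{LambdaB}; since these isomorphisms are all realized by one and the same map (the Hermite expansion), a continuous inclusion between two of the function spaces is equivalent to the corresponding continuous inclusion between the associated $\Lambda$'s. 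For the latter the inclusion is \emph{level‑wise}: using that the Roumieu $\Lambda_{\{\cdot\}}$ are regular $(LB)$‑spaces and the Beurling $\Lambda_{(\cdot)}$ are Fr\'echet spaces (see \cite{BJOS-Banach}), one gets, for instance in case (i), that $\Sch_{\{\calM\}}\subseteq\Sch_{\{\calN\}}$ holds with continuous inclusion iff for every $j\in\N$ there are $k\in\N$ and $D>0$ with $\omega_{\bfN^{(k)}}(\alpha^{1/2}/k)\le\omega_{\bfM^{(j)}}(\alpha^{1/2}/j)+D$ for all $\alpha\in\N_0^d$; in cases (ii) and (iii) one obtains analogous inequalities, with the quantifiers on the matrix indices and on the dilation parameters prescribed by the (inductive/projective) limit topologies.

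\emph{Step 2: the relations as comparisons of associated functions.} This is where the log‑convexity of $\calM$ enters. Theorem~\ref{cor1} gives $M^{(\lambda)}_\alpha=\sup_{s\in(0,+\infty)^d}s^\alpha e^{-\omega_{\bfM^{(\lambda)}}(s)}$ for every $\lambda>0$, while for $\calN$ only the one‑sided estimate \eqref{55bis} is available. Expanding $\omega_\bfM(e^k)=\sup_{\alpha}(\langle k,\alpha\rangle-\log M_\alpha)$ as in \eqref{42}, a direct dilation argument shows that $\calM\{\preceq\}\calN$ is equivalent to: for every $\lambda>0$ there are $\kappa>0$ and $C\ge1$ with $\omega_{\bfN^{(\kappa)}}(s/C)\le\omega_{\bfM^{(\lambda)}}(s)$ for all $s\in(0,+\infty)^d$ — the forward direction being elementary, the backward one using exactly Theorem~\ref{cor1} for $\calM$ together with \eqref{55bis} for $\calN$. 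In the same way $\calM\vartriangleleft\calN$ is equivalent to: for all $\lambda,\kappa,h>0$ there is $C>0$ with $\omega_{\bfN^{(\kappa)}}(hs)\le\omega_{\bfM^{(\lambda)}}(s)+C$ for all $s$; and $\calM(\preceq)\calN$ is equivalent to: for every $\lambda>0$ there are $\kappa>0$ and $C\ge1$ with $\omega_{\bfN^{(\lambda)}}(s/C)\le\omega_{\bfM^{(\kappa)}}(s)$ for all $s$.

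\emph{Step 3: from the grid to the quadrant.} It remains to identify the level‑wise inequalities of Step~1, which only concern the values of the $\omega$'s on the discrete set $\{\alpha^{1/2}/h:\alpha\in\N_0^d\}$, with the inequalities of Step~2, which concern all of $(0,+\infty)^d$. The implication ``quadrant $\Rightarrow$ grid'' is immediate by restriction and by monotonicity of $\omega$ in its argument and in the matrix index (after an index shift). For ``grid $\Rightarrow$ quadrant'' one fills in the gaps between consecutive grid points using monotonicity and continuity of each $\omega_{\bfM^{(\lambda)}}$; the conditions \eqref{L21R}/\eqref{L21B} are used to replace the dilation $1/j$ by $1/k$ and, more generally, to decouple the matrix index from the dilation parameter at the price of an index change and an additive constant; and the moderate‑growth conditions \eqref{L37R}/\eqref{63B} imposed on $\calM$ serve to control $\omega_{\bfM^{(\lambda)}}$ near the boundary of $[0,+\infty)^d$, where they reduce the estimate to the coordinate hyperplanes and hence, by descending induction on $d$ in the spirit of the proof of Theorem~\ref{th1}, to the one‑dimensional case; the factor hidden in the scaling $\alpha^{1/2}$ (of Gelfand--Shilov/Hermite origin) is absorbed by \eqref{L12R}/\eqref{L12B}. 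Assembling Steps~1--3 in each of the three settings yields the three stated equivalences.

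\emph{Main obstacle.} The crux is Step~3, and specifically the behavior near $\partial[0,+\infty)^d$: one must propagate a comparison known only on the grid to the whole quadrant while preserving the precise ``$\forall\lambda\,\exists\kappa$'' pattern through the successive index changes. This is exactly the point at which \eqref{L37R}/\eqref{63B} become indispensable for $d>1$ (they are automatic for $d=1$, cf.\ the remark preceding the statement), and it is also why Theorem~\ref{cor1} has to be invoked in its genuine $d$‑dimensional form, so that the strong log‑convexity of Definition~\ref{defcvx} — not merely the coordinate‑wise condition \eqref{LOG-C} — is what must be required of $\calM$.
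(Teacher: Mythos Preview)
Your three-step architecture --- pass to the sequence spaces $\Lambda$, translate the relations $\{\preceq\},\vartriangleleft,(\preceq)$ into inequalities between associated functions via Theorem~\ref{cor1} and \eqref{55bis}, and bridge the grid $\{\alpha^{1/2}/h\}$ to the full quadrant --- is the same skeleton the paper follows, and Step~2 is exactly right. There is one genuine methodological difference in Step~1: instead of invoking Grothendieck factorization/regularity of $(LB)$-spaces and testing with $\delta$-sequences, the paper plugs in, for each fixed index $j$, the explicit sequence $c_\alpha=e^{-\omega_{\bfM^{(j)}}(\bar\alpha^{1/2}/j)}$ (cases (i)--(ii)) and $\delta$-sequences only in the Fr\'echet case (iii). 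Here $\bar\alpha$ is obtained from $\alpha$ by replacing every coordinate equal to $1$ by $0$; condition \eqref{L37R}/\eqref{63B} is used precisely to show
\[
\omega_{\bfM^{(h)}}(\alpha^{1/2}/h)\ \le\ \omega_{\bfM^{(j)}}(\bar\alpha^{1/2}/j)+C_{j,h},
\]
hence $\bfc\in\Lambda_{\{\calM\}}$, and the inclusion then outputs the grid inequality already with $\bar\alpha$ on the $\calM$-side.

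Your Step~3 is where the real gap lies. The grid-to-quadrant passage does not reduce to ``monotonicity and continuity plus descending induction on $d$ in the spirit of Theorem~\ref{th1}''. The concrete obstruction is this: for $t\in(0,+\infty)^d$ one picks $\alpha$ with $\alpha^{1/2}<t\le(\alpha+\mathbf 1)^{1/2}$ and wants $(\alpha+\mathbf 1)^{1/2}/j\le t$ (or $\le\alpha^{1/2}$) to sandwich; but if some $t_k$ is small then $\alpha_k=0$ and $(\alpha_k+1)^{1/2}/j=1/j$ need not be $\le\alpha_k^{1/2}=0$. The paper's $\bar\alpha$-trick kills exactly these coordinates, so that $(\overline{\alpha+\mathbf 1})^{1/2}/j\le\alpha^{1/2}$ holds trivially there; this is a coordinate splitting, not an inductive descent, and it is why the grid inequality must carry $\bar\alpha$ on the $\calM$-side before you interpolate. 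If you insist on obtaining the grid inequality at $\alpha$ via Grothendieck + deltas, you still need, as an additional step, the displayed estimate above (a direct consequence of \eqref{L37R}/\eqref{63B}) to replace $\alpha$ by $\bar\alpha$ on the right before extending to all $t$. Finally, two attributions in your Step~3 are off: \eqref{L21R}/\eqref{L21B} and \eqref{L12R}/\eqref{L12B} are not used inside the grid-to-quadrant argument --- they enter only through the isomorphism $\Sch\simeq\Lambda$ and through the finiteness of $\omega_{\bfM^{(\lambda)}}$ (via \eqref{limMalpha}); the sole extra structural ingredient in Step~3 is \eqref{L37R}/\eqref{63B}, exactly as the remark preceding the theorem anticipates.
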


\begin{proof}
The sufficiency of condition $\calM\{\preceq\}\calN$ (resp.
$\calM\vartriangleleft\calN$, $\calM(\preceq)\calN$) for the inclusion of the spaces clearly follows from the definition of the spaces (hypotheses \eqref{L12R}-\eqref{L37R}/\eqref{L12B}-\eqref{63B}
and log-convexity of $\calM$ are not needed at this step).

We now prove the necessity of the conditions.

$(i)$: Let us assume that $\Sch_{\{\calM\}}(\R^d)\subseteq\Sch_{\{\calN\}}(\R^d)$ holds with continuous inclusion and prove that $\calM\{\preceq\}\calN$.

By the already mentioned isomorphism with the sequence space \eqref{LambdaR}, we have that
\beqs
\label{Q1}
\Lambda_{\{\calM\}}\simeq\Sch_{\{\calM\}}(\R^d)\subseteq\Sch_{\{\calN\}}(\R^d)
\simeq\Lambda_{\{\calN\}}.
\eeqs

For some fixed (but arbitrary) $j\in\N$, $j\geq2$, let us consider
the sequence $\bfc:=(c_\alpha)_{\alpha\in\N_0^d}\in\C^{\N_0^d}$
defined by
\beqs
\label{Q5}
c_\alpha:=e^{-\omega_{\bfM^{(j)}}(\bar\alpha^{1/2}/j)}
\eeqs
for $\bar\alpha=(\bar\alpha_1,\ldots,\bar\alpha_d)$ with
\beqs
\label{alphakbar}
\bar\alpha_k=\begin{cases}
\alpha_k & \mbox{if}\ \alpha_k\neq1\cr
0 & \mbox{if}\ \alpha_k=1,
\end{cases}
\eeqs
with the convention $e^{-\infty}=0$.

Let us prove that $\bfc\in\Lambda_{\{\calM\}}$.
If $\omega_{\bfM^{(j)}}(\bar{\alpha}^{1/2}/j)=+\infty$ there is nothing to prove.
If $\omega_{\bfM^{(j)}}(\bar{\alpha}^{1/2}/j)<+\infty$ we must find
 $h\in\N$ and $C>0$ such that
\beqsn
|c_\alpha|=e^{-\omega_{\bfM^{(j)}}(\bar\alpha^{1/2}/j)}\leq C
e^{-\omega_{\bfM^{(h)}}(\alpha^{1/2}/h)}\qquad
\forall\alpha\in\N_0^d,
\eeqsn
i.e.
\beqs
\label{QQ3}
\omega_{\bfM^{(h)}}\left(\frac{\sqrt{\alpha_1}}{h},\ldots,\frac{\sqrt{\alpha_d}}{h}\right)
\leq\log C+\omega_{\bfM^{(j)}}\left(\frac{\sqrt{\bar\alpha_1}}{j},\ldots,\frac{\sqrt{\bar\alpha_d}}{j}\right).
\eeqs
Let us first choose $\bar h\in\N$, $\bar h\geq j$, such that $\bfM^{(h)}$ satisfies \eqref{limMalpha},
and hence
$\omega_{\bfM^{(h)}}(t)<+\infty$ for all $t\in\R^d$,
for all $ h\geq\bar h$ (this is possible because of assumption \eqref{L12R}).
Then by assumption \eqref{L37R} we choose $h\in\N$, $h\geq\bar h$, and $A\geq1$ such that
\beqs
\label{jh}
M^{(j)}_\alpha M^{(j)}_\beta\leq A^{|\alpha+\beta|}M^{(h)}_{\alpha+\beta},\qquad
\forall\alpha,\beta\in\N_0^d.
\eeqs
For such a choice of $h$ and from the definition of associated function
\beqs
\label{QQ1}
\omega_{\bfM^{(h)}}\left(\frac{\sqrt{\alpha_1}}{h},\ldots,\frac{\sqrt{\alpha_d}}{h}\right)
=\sup_{\beta_1,\ldots,\beta_d\in\N_0}\log
\frac{\alpha_1^{\beta_1/2}\cdots\alpha_d^{\beta_d/2}}{h^{\beta_1+\ldots+\beta_d}
M^{(h)}_{(\beta_1,\ldots,\beta_d)}}.
\eeqs

Let us now assume, without loss of generality, that the entries of $\alpha$ equal to 1 (if there are some) are in the first positions, i.e.
$(\alpha_1,\ldots,\alpha_d)=(1,\ldots,1,\alpha_{s+1},\ldots,\alpha_d)$, for some $s\geq0$, with
$\alpha_k\neq1$ for $s+1\leq k\leq d$.
By  \eqref{jh} we have that
\beqsn
M^{(h)}_{(\beta_1,\ldots,\beta_d)}\geq
\frac{1}{A^{\beta_1+\ldots+\beta_d}}M^{(j)}_{(\beta_1,\ldots,\beta_s,0,\ldots,0)}
M^{(j)}_{(0,\ldots,0,\beta_{s+1},\ldots,\beta_d)}
\eeqsn
and hence from \eqref{QQ1}
\beqsn
\omega_{\bfM^{(h)}}\left(\frac{\alpha^{1/2}}{h}\right)\leq&&
\sup_{\beta_1,\ldots,\beta_d\in\N_0}\Bigg(\log
\frac{\alpha_{s+1}^{\beta_{s+1}/2}\cdots\alpha_d^{\beta_d/2}}{\left(
\frac{h}{A}\right)^{\beta_{s+1}+\ldots+\beta_d}
M^{(j)}_{(0,\ldots,0,\beta_{s+1},\ldots,\beta_d)}}\\
&&
\qquad+\log\frac{(A/h)^{\beta_1+\ldots+\beta_s}}{
M^{(j)}_{(\beta_{1},\ldots,\beta_s,0,\ldots,0)}}\Bigg)\\
\leq&&
\sup_{\beta_{s+1},\ldots,\beta_d\in\N_0}\log
\frac{\alpha_{s+1}^{\beta_{s+1}/2}\cdots\alpha_d^{\beta_d/2}}{\left(
\frac{h}{A}\right)^{\beta_{s+1}+\ldots+\beta_d}
M^{(j)}_{(0,\ldots,0,\beta_{s+1},\ldots,\beta_d)}}\\
&&+\sup_{\beta_{1},\ldots,\beta_s\in\N_0}\log\frac{\left(\frac Ah\right)^{\beta_1+\ldots+\beta_s}}{
M^{(j)}_{(\beta_{1},\ldots,\beta_s,0,\ldots,0)}}.
\eeqsn

Eventually enlarging $h$ so that $h\geq Aj$ we thus have
\beqs
\nonumber
\omega_{\bfM^{(h)}}\left(\frac{\alpha^{1/2}}{h}\right)\leq&&
\sup_{\beta_{s+1},\ldots,\beta_d\in\N_0}\log
\frac{\alpha_{s+1}^{\beta_{s+1}/2}\cdots\alpha_d^{\beta_d/2}}{j^{\beta_{s+1}+\ldots+\beta_d}
M^{(j)}_{(0,\ldots,0,\beta_{s+1},\ldots,\beta_d)}}+\omega_{\bar\bfM^{(j)}}\left(\frac Ah\right)\\
\label{finiteC}
\leq&&\omega_{\bfM^{(j)}}\left(\frac{\bar\alpha^{1/2}}{j}\right)+C_{j,h}
\eeqs
for $\{\bar M_\beta^{(j)}\}_{\beta\in\N_0^s}$ the sequence given by $\bar M_\beta^{(j)}=M^{(j)}_{(\beta_{1},\ldots,\beta_s,0,\ldots,0)}$
and $C_{j,h}=\omega_{\bar\bfM^{(j)}}(A/h)$.
% [Why is this number finite? I think we require $(\bar\bfM_\beta^{(j)})^{1/|\beta|}\rightarrow+\infty$ for this and this is equivalent to have the same property for $M^{(j)}$... So, in my opinion, we should only work with indices such that this growth restriction holds true and this fact could lighten the notation a little bit because we can get rid of the choice for $\overline{h}$ above... We should think about this....]
Note that this constant is finite because if $|(\beta_1,\ldots,\beta_s)|\to+\infty$ then also
$|(\beta_1,\ldots,\beta_s,0,\ldots,0)|\to+\infty$ and hence
\beqsn
\left(\bar{M}^{j}_\beta\right)^{1/|\beta|}=\left({M}^{j}_{(\beta_1,\ldots,\beta_s,0,\ldots,0)}\right)^{1/(\beta_1+\cdots+\beta_s)}\longrightarrow+\infty
\eeqsn
by \eqref{limMalpha}.

Inequality \eqref{finiteC} proves \eqref{QQ3} and hence $\bfc\in\Lambda_{\{\calM\}}$.

From \eqref{Q1} we have that
$\bfc\in\Lambda_{\{\calN\}}$, and therefore there exist $\ell\in\N$ and $C\geq1$ such that
\beqsn
e^{-\omega_{\bfM^{(j)}}(\bar\alpha^{1/2}/j)}=|c_\alpha|
\leq C e^{-\omega_{\bfN^{(\ell)}}(\alpha^{1/2}/\ell)},\qquad \forall\alpha\in\N_0^d,
\eeqsn
i.e.
\beqs
\label{Q2}
\omega_{\bfN^{(\ell)}}(\alpha^{1/2}/\ell)\leq
\log C+\omega_{\bfM^{(j)}}(\bar\alpha^{1/2}/j),\qquad \forall\alpha\in\N_0^d.
\eeqs

Fix now $t\in(0,+\infty)^d$ and set $\mathbf1:=(1,\ldots,1)\in\R^d$. There exists $\alpha\in\N_0^d$ with $\alpha^{1/2}<t\leq
(\alpha+\mathbf1)^{1/2}$, i.e. $\alpha_k<t_k^2\leq\alpha_k+1$ for all $1\leq k\leq d$.
From \eqref{Q2} we have
\beqsn
\omega_{\bfN^{(\ell)}}(t/\ell)\leq&&\omega_{\bfN^{(\ell)}}((\alpha+\mathbf1)^{1/2}/\ell)
\leq\log C+\omega_{\bfM^{(j)}}((\overline{\alpha+\mathbf1})^{1/2}/j)\\
\leq&&\log C+\omega_{\bfM^{(j)}}(\alpha^{1/2})
\leq\log C+\omega_{\bfM^{(j)}}(t)
\eeqsn
because $\omega_{\bfM^{(j)}}$ is increasing on $(0,+\infty)^d$ and $(\overline{\alpha+\mathbf1})^{1/2}/j\leq\alpha^{1/2}$
since $\overline{\alpha_k+1}=0$ if $\alpha_k=0$ and $(\overline{\alpha_k+1})^{1/2}/j=\sqrt{\alpha_k+1}/j\leq\sqrt{\alpha_k}$ if
$\alpha_k\geq1$ and $j\geq2$.

From  \eqref{55bis} and \eqref{Q3} we thus have that
\beqsn
M^{(j)}_\alpha=&&\sup_{t\in(0,+\infty)^d}\frac{t^\alpha}{\exp\omega_{\bfM^{(j)}}(t)}
\leq C\sup_{t\in(0,+\infty)^d}\frac{t^\alpha}{\exp\omega_{\bfN^{(\ell)}}(t/\ell)}\\
=&&C\sup_{s\in(0,+\infty)^d}\frac{(s\ell)^\alpha}{\exp\omega_{\bfN^{(\ell)}}(s)}
\leq C\ell^{|\alpha|}N_\alpha^{(\ell)}.
\eeqsn

We have thus proved that
\beqsn
\forall j\in\N, j\geq2, \exists\ell\in\N, C\geq1:\quad
M^{(j)}_\alpha\leq C\ell^{|\alpha|}N_\alpha^{(\ell)}\quad \forall\alpha\in\N_0^d.
\eeqsn
Since $M_\alpha^{(j)}\leq M_\alpha^{(2)}$ for $j<2$ and the sequences in the weight matrices are normalized, we have proved that
$\calM\{\preceq\}\calN$.

$(ii)$:
Assuming
\beqsn
\Lambda_{\{\calM\}}\simeq\Sch_{\{\calM\}}(\R^d)\subseteq\Sch_{(\calN)}(\R^d)
\simeq\Lambda_{(\calN)}
\eeqsn
we have to prove that $\calM\vartriangleleft\calN$.

We choose $\bfc=(c_\alpha)_{\alpha\in\N_0^d}\in\C^{\N^d}$ as in \eqref{Q5} so that
$\bfc\in\Lambda_{\{\calM\}}\subseteq\Lambda_{(\calN)}(\R^d)$ i.e.
\beqs
\label{Q6}\qquad\
\forall j\in\N,j\geq2,\forall\ell\in\N\, \exists C\geq1:\ \
e^{-\omega_{\bfM^{(j)}}(\bar\alpha^{1/2}/j)}=|c_\alpha|\leq
Ce^{-\omega_{\bfN^{(1/\ell)}}(\alpha^{1/2}\ell)}\
\forall\alpha\in\N_0^d.
\eeqs

As in the case $(i)$, for any $t\in(0,+\infty)^d$ we can choose
$\alpha\in\N_0^d$ with $\alpha^{1/2}<t\leq (\alpha+\mathbf1)^{1/2}$
so that from \eqref{Q6}:
\beqsn
\omega_{\bfN^{(1/\ell)}}(t\ell)\leq&&\omega_{\bfN^{(1/\ell)}}((\alpha+\mathbf1)^{1/2}\ell)
\leq\log C+\omega_{\bfM^{(j)}}((\overline{\alpha+\mathbf1})^{1/2}/j)\\
\leq&&\log C+\omega_{\bfM^{(j)}}(\alpha^{1/2})
\leq\log C+\omega_{\bfM^{(j)}}(t).
\eeqsn

It follows that for all $j\in\N$, $j\geq2$,  and for all $\ell\in\N$ there exists $C\geq1$ such that
\beqsn
M^{(j)}_\alpha=&&\sup_{t\in(0,+\infty)^d}\frac{t^\alpha}{\exp\omega_{\bfM^{(j)}}(t)}
\leq C\sup_{t\in(0,+\infty)^d}\frac{t^\alpha}{\exp\omega_{\bfN^{(1/\ell)}}(t\ell)}\\
=&&C\sup_{s\in(0,+\infty)^d}\frac{(s/\ell)^\alpha}{\exp\omega_{\bfN^{(1/\ell)}}(s)}
\leq C\frac{1}{\ell^{|\alpha|}}N_\alpha^{(1/\ell)},
\eeqsn
because of \eqref{55bis} and \eqref{Q3}.
Since $M_\alpha^{(j)}\leq M_\alpha^{(2)}$ for $j<2$ we finally obtain
\beqs
\label{Q10}
\forall j\in\N\, \forall\ell\in\N\, \exists C\geq1:\quad
M_\alpha^{(j)}\leq C \frac{1}{\ell^{|\alpha|}}N_\alpha^{(1/\ell)}\ \
\forall\alpha\in\N_0^d.
\eeqs
Now, it is obvious that \eqref{Q10} implies condition $\calM\vartriangleleft\calN$. In fact, it is enough to take $\ell\ge \max\{1/\kappa ,1/h\}$ for any given $\kappa, h>0$ in the definition of $\calM\vartriangleleft\calN$.

%We claim that this condition implies
%\beqs
%\label{Q11}\quad\
%\forall j\in\N\, \forall\ell\in\N\, \forall h>0\,\mbox{with}\ h^{-1}\in\N\,\exists C\geq1:\ \
%M_\alpha^{(j)}\leq C h^{|\alpha|}N_\alpha^{(1/\ell)}\ \
%\forall\alpha\in\N_0^d,
%\eeqs
%which clearly implies $\calM\vartriangleleft\calN$.
%
%To prove \eqref{Q11} let us first fix $j\in\N$ and $h>0$ with $h^{-1}\in\N$.
%For $\ell\geq 1/h$ then \eqref{Q10} trivially implies \eqref{Q11}.
%For $\ell\leq1/h$ then $N_\alpha^{(1/\ell)}\geq N_\alpha^{(h)}$ and applying
%\eqref{Q10} with $\ell=1/h$ we have that
%\beqsn
%M_\alpha^{(j)}\leq Ch^{|\alpha|}N_\alpha^{(h)}\leq Ch^{|\alpha|}N_\alpha^{(1/\ell)}
%\eeqsn
%so that \eqref{Q11} is satisfied again.

$(iii)$:
Assuming
\beqsn
\Lambda_{(\calM)}\simeq\Sch_{(\calM)}(\R^d)\subseteq\Sch_{(\calN)}(\R^d)
\simeq\Lambda_{(\calN)}
\eeqsn
we have to prove that $\calM(\preceq)\calN$.

By the continuity of the inclusion and \eqref{LambdaB} we have 
\beqs
\label{Q8}
\begin{split}
&\forall \ell\in\N\,\exists h\in\N, C\geq1\,\mbox{s.t.}\,\forall\bfc\in\Lambda_{(\calM)}\\
&\sup_{\alpha\in\N_0^d}|c_\alpha|e^{\omega_{\bfN^{(1/\ell)}}(\ell\alpha^{1/2})}\leq C
\sup_{\alpha\in\N_0^d}|c_\alpha|e^{\omega_{\bfM^{(1/h)}}(h\alpha^{1/2})}.
\end{split}
\eeqs

Let us now fix $\alpha\in\N_0^d$ and assume as before, without loss of generality, that 
$(\alpha_1,\ldots,\alpha_d)=(1,\ldots,1,\alpha_{s+1},\ldots,\alpha_d)$, for some $s\geq0$, with
$\alpha_k\neq1$ for $s+1\leq k\leq d$. Setting $\bar\alpha$ as in \eqref{alphakbar},
by \eqref{63B} there exist $j\in\N$, $j\geq h$, and $A\geq1$ such that
\beqs
\nonumber
\omega_{\mathbf{M}^{(1/h)}}(h\alpha^{1/2})=&&\sup_{\beta\in\N_0^d}\log
\frac{h^{\beta_1+\ldots+\beta_d}\alpha_1^{\beta_1/2}\cdots\alpha_d^{\beta_d/2}}{M^{(1/h)}_\beta}\\
\nonumber
\leq&&\sup_{\beta\in\N_0^d}\log
\frac{h^{\beta_1+\ldots+\beta_d}\cdot1\cdot\alpha_{s+1}^{\beta_{s+1}/2}\cdots
\alpha_d^{\beta_d/2}}{(1/A)^{\beta_1+\ldots+\beta_d}M^{(1/j)}_{(\beta_1,\ldots,\beta_s,0,\ldots0)}
M^{(1/j)}_{(0,\ldots,0,\beta_{s+1},\ldots,\beta_d)}}\\
\nonumber
\leq&&\sup_{\beta\in\N_0^d}\log
\frac{(Ah)^{\beta_1+\ldots+\beta_s}}{M^{(1/j)}_{(\beta_1,\ldots,\beta_s,0,\ldots0)}}\\
\nonumber
&&+\sup_{\beta\in\N_0^d}\log
\frac{(Ah)^{\beta_{s+1}+\ldots+\beta_d}\alpha_{s+1}^{\beta_{s+1}/2}\cdots
\alpha_d^{\beta_d/2}}{M^{(1/j)}_{(0,\ldots,0,\beta_{s+1},\ldots,\beta_d)}}\\
\label{cjh}
\leq&&\omega_{\bar{\mathbf{M}}^{(1/j)}}(Ah)+\omega_{\mathbf{M}^{(1/j)}}(j\bar{\alpha}^{1/2})
\eeqs
for $\{\bar{M}^{(1/j)}_\beta\}_{\beta\in\N_0^s}$ defined by $\bar{M}^{(1/j)}_\beta:=
M^{(1/j)}_{(\beta_1,\ldots,\beta_s,0,\ldots,0)}$, where we have chosen $j\geq Ah$, taking into account that the associate function is increasing on $(0,+\infty)^d$. Note also that if $j\geq j_0$ then $\mathbf{M}^{(1/j)}\leq
\mathbf{M}^{(1/{j_0})}$ and hence $\omega_{\mathbf{M}^{(1/j)}}\geq 
\omega_{\mathbf{M}^{(1/{j_0})}}$.
Since $\omega_{\bar{\mathbf{M}}^{(1/j)}}(Ah)$ is a new constant depending on $\ell$
($A$ and $j$ depend on $h$ that depends on $\ell$), substituting \eqref{cjh} into
\eqref{Q8} we have that
\beqs
\label{620bis}
\begin{split}
&\forall \ell\in\N\,\exists j\in\N, C\geq1\,\mbox{s.t.}\,\forall\bfc\in\Lambda_{(\calM)}\\
&\sup_{\alpha\in\N_0^d}|c_\alpha|e^{\omega_{\bfN^{(1/\ell)}}(\ell\alpha^{1/2})}\leq C
\sup_{\alpha\in\N_0^d}|c_\alpha|e^{\omega_{\bfM^{(1/j)}}(j\bar{\alpha}^{1/2})}.
\end{split}
\eeqs

For $\beta\in\N_0^d$ we now consider $\bfc^\beta=(c_\alpha^\beta)_{\alpha\in\N_0^d}$
defined by
\beqsn
c_\alpha^\beta:=\delta_{\alpha\beta}=\begin{cases}
1 & \mbox{if}\ \alpha=\beta\cr
0 & \mbox{if}\ \alpha\neq\beta.
\end{cases}
\eeqsn

Clearly $\bfc^\beta\in\Lambda_{(\calM)}$ since
\beqsn
\sup_{\alpha\in\N_0^d}|c_\alpha^\beta|e^{\omega_{\bfM^{(1/h)}}(h\alpha^{1/2})}=
e^{\omega_{\bfM^{(1/h)}}(h\beta^{1/2})}<+\infty
\eeqsn
for every $\beta\in\N_0^d$ and $h\in\N$, by assumption \eqref{L12B}
which implies that $\bfM^{(1/h)}$ satisfies \eqref{limMalpha} and hence
$\omega_{\bfM^{(1/h)}}(t)<+\infty$ for all $t\in\R^d$.

We apply \eqref{620bis} to the sequences $\bfc^\beta$ and get that for all
$\ell\in\N$ there exist $j\in\N$ and $C\geq 1$ such that for all $\beta\in\N_0^d$ we have
\beqsn
e^{\omega_{\bfN^{(1/\ell)}}(\ell\beta^{1/2})}\leq C
e^{\omega_{\bfM^{(1/j)}}(j\bar\beta^{1/2})}
\eeqsn
or equivalently
\beqs
\label{Q9}
\omega_{\bfN^{(1/\ell)}}(\ell\beta^{1/2})
\leq\log C+\omega_{\bfM^{(1/j)}}(j\bar\beta^{1/2}).
\eeqs

If $t\in(0,+\infty)^d$ then there exist $\beta\in\N_0^d$ with $\beta^{1/2}<t\leq(\beta+\mathbf1)^{1/2}$ so that, from \eqref{Q9}: 
\beqsn
\omega_{\bfN^{(1/\ell)}}(\ell t)\leq&&
\omega_{\bfN^{(1/\ell)}}(\ell(\beta+\mathbf1)^{1/2})
\leq \log C+\omega_{\bfM^{(1/j)}}(j(\overline{\beta+\mathbf1})^{1/2})\\
\leq&&  \log C+\omega_{\bfM^{(1/j)}}(2j\beta^{1/2})
\leq  \log C+\omega_{\bfM^{(1/j)}}(2jt)
\eeqsn
since the associate function is increasing on $(0,+\infty)^d$ and $(\overline{\beta+\mathbf1})^{1/2}
\leq2\beta^{1/2}$.

It finally follows from \eqref{55bis} and \eqref{Q3} that for all $\ell\in\N$ there exist $j\in\N$
and $C\geq1$ such that
\beqsn
N_\alpha^{(1/\ell)}\geq&&\sup_{t\in(0,+\infty)^d}\frac{t^\alpha}{\exp
\omega_{\bfN^{(1/\ell)}}(t)}
=\sup_{s\in(0,+\infty)^d}\frac{(s\ell)^\alpha}{\exp
\omega_{\bfN^{(1/\ell)}}(s\ell)}\\
\geq&&\frac{\ell^{|\alpha|}}{C}\sup_{s\in(0,+\infty)^d}\frac{s^\alpha}{\exp
\omega_{\bfM^{(1/j)}}(2js)}
=\frac{\ell^{|\alpha|}}{C}\sup_{t\in(0,+\infty)^d}\frac{\left(\frac{t}{2j}\right)^\alpha}{\exp
\omega_{\bfM^{(1/j)}}(t)}\\
=&&\frac 1C\left(\frac{\ell}{2j}\right)^{|\alpha|}M_\alpha^{(1/j)}
\eeqsn
and we have thus proved that $\calM(\preceq)\calN$, since the sequences $\bfM^{(1/j)}$ and $\bfN^{(1/\ell)}$ are normalized.
\end{proof}

\begin{Rem}
\begin{em}
\label{rem62}
In Theorem~\ref{th42} we used Theorem~\ref{cor1} to characterise
the inclusion relations of the spaces for any dimension $d$, which was not possible in the 
analogous results \cite[Theorems~4.4, 4.5, 4.6]{BJOS-fuzzy},  where we needed $d=1$ in one 
implication since 
formula \eqref{Q3} was not available for the general anisotropic case.
\end{em}
\end{Rem}

\vspace*{10mm}
{\bf Acknowledgments.}
The authors are sincerely grateful to Prof. Andreas Debrouwere for pointing out 
 that the argument in the first version of the proof of Theorem~\ref{th1} was incomplete.
Boiti and Oliaro were partially supported by the INdAM - GNAMPA Project 2023
``Analisi di Fourier e Analisi Tempo-Frequenza
di Spazi Fun\-zio\-na\-li ed Ope\-ra\-to\-ri", CUP\_\,E53C22001930001.
Boiti was partially supported by the Projects FAR 2020, FAR 2021, FAR 2022,
FIRD 2022 and FAR 2023 (University of Ferrara). Jornet is partially supported by the project PID2020-\-119457GB-\-100 funded by MCIN/AEI/10.13039/501100011033 and by ``ERDF A way of making Europe''. Schindl is supported 
by FWF (Austrian Science fund) project 10.55776/P33417.

\vspace{5mm}
\samepage{
{\bf Statements and Declarations:}
\begin{itemize}
\item
 {\em Conflict of interest.} The authors declare that
there is no conflict of interest.
\item{\em Data availability statements.} The manuscript has no associated data.
\end{itemize}}

\end{document}